\documentclass[10pt, a4paper]{amsart}
\usepackage[utf8]{inputenc}
\usepackage{hyperref}
\usepackage{amsfonts}
\usepackage{amssymb}
\usepackage{amsmath}
\usepackage{amsthm}
\usepackage{newlfont}
\usepackage[mathcal]{euscript}
\usepackage{tikz}
\usepackage{soul}
\usepackage{enumerate}
\usepackage{thm-restate}
\usetikzlibrary{shapes.geometric}
\usetikzlibrary{decorations.pathreplacing}

\vfuzz2pt

 \newtheorem{thm}{Theorem}

\newtheorem{lem}[thm]{Lemma}

\newtheorem{lemma}[thm]{Lemma}

\newtheorem{example}[thm]{Example}

\newtheorem{remark}[thm]{Remark}

\DeclareMathOperator{\Int}{int}
\DeclareMathOperator{\diam}{diam}

\newcommand{\degr}{\mathrm{deg}}

\newcommand{\dist}{\mathrm{dist}}

\newcommand{\di}{\mathrm{d}}

\newcommand{\crit}{\mathrm{Crit}}

 \newcommand{\eps}{\varepsilon}

 \def\B{{\mathcal B}}

 \newcommand{\A}{\mathcal{A}}

 \newcommand{\N}{\mathbb{N}}
 \newcommand{\Z}{\mathbb{Z}}

 \newcommand{\R}{\mathbb{R}}
 
 \newcommand{\Ci}{\mathbb{S}^1}

 \def\Fh{\hat F}

\begin{document}

\title[Are generic dynamical properties stable?]{Are generic dynamical properties stable under composition with rotations?}
\author{Jozef Bobok}
\author{Jernej \v Cin\v c}
\author{Piotr Oprocha}
\author{Serge Troubetzkoy}

\address[J.\ Bobok]{Department of Mathematics of FCE, Czech Technical University in Prague,
Th\'akurova 7, 166 29 Prague 6, Czech Republic}
\email{jozef.bobok@cvut.cz}

\address[J.\ \v{C}in\v{c}]{AGH University of Science and Technology, Faculty of Applied Mathematics,
al.\ Mickiewicza 30, 30-059 Krak\'ow, Poland. -- and -- Centre of Excellence IT4Innovations - Institute for Research and Applications of Fuzzy Modeling, University of Ostrava, 30. dubna 22, 701 03 Ostrava 1, Czech Republic}
\email{jernej.cinc@osu.cz}

\address[P.\ Oprocha]{AGH University of Science and Technology, Faculty of Applied Mathematics,
al.\ Mickiewicza 30, 30-059 Krak\'ow, Poland. -- and --
Centre of Excellence IT4Innovations - Institute for Research and Applications of Fuzzy Modeling, University of Ostrava, 30. dubna 22, 701 03 Ostrava 1, Czech Republic}
\email{oprocha@agh.edu.pl}

\address[S.\ Troubetzkoy]{Aix Marseille Univ, CNRS, I2M, Marseille, France
postal address: I2M, Luminy, Case 907, F-13288 Marseille Cedex 9, France}
\email{serge.troubetzkoy@univ-amu.fr}

\date{\today}

\subjclass[2020]{37E05, 37C20}
\keywords{Circle map, Lebesgue measure-preserving, rotation, generic property}

\begin{abstract}
In this paper we provide a detailed topological and measure-theoretic study of Lebesgue measure-preserving circle maps that are rotated with inner and outer rotations which are independent of each other. In particular, we analyze  the stability of the locally eventually onto and measure-theoretic mixing properties.
\end{abstract}

\maketitle

\section{Introduction}
 We study the structure of Lebesgue measure-preserving maps of the circle, these are one-dimensional versions of volume-preserving maps, or more broadly, conservative dynamical systems; ergodic maps preserving Lebesgue measure are the most fundamental examples of maps having a unique physical measure.
 In recent years there has been extensive interest in piecewise linear Lebesgue measure-preserving  one dimensional case, the most classical example being {\em interval exchange transformations (IETs)}. Ergodic properties of the Lebesgue measure for IETs are of prime interest. Keane  proved the minimality of most IETs
and showed that there is a large class of IETs which are not uniquely ergodic but Lebesgue measure is ergodic  \cite{Kea1},\cite{Kea2}, thus initiating questions about typical properties of IETs or more generally Lebsegue measure-preserving transformations. These questions have attracted much attention, for example  Masur \cite{Ma} and Veech \cite{Ve} proved that typical minimal IETs are uniquely ergodic. This line of research more recently culminated in the work of Avila and Forni \cite{AF}, who proved among other things that typical IETs are weakly mixing.
 
The question we study in this article is the following;
we consider a dynamical property and ask for which circle maps $f$  satisfying this property, when we compose $f$ with  circle rotations,  this property still holds for the resulting map $g := r_{\alpha} \circ f \circ r_{\beta}$.

Such a question was studied for example in
 \cite{AB}. The authors showed that for any interval exchange transformation $T$ the map $T \circ r_{\beta}$ is uniquely ergodic for almost every $\beta$.  This nice result motivated us to initiate our study.  

We place ourselves in the framework of continuous Lebesgue measure-preserving maps of the circle, denoted $C_{\lambda}(\mathbb{S}^1)$.
This setting is in some sense complementary to the setting of IETs:  except for rotations,  IETs are invertible piecewise monotone but not continuous while
maps in $C_{\lambda}(\mathbb{S}^1)$ are  not invertible and  not piecewise monotone but continuous.
We define an operator $T_{\alpha,\beta}: C_{\lambda}(\mathbb{S}^1) \mapsto  C_{\lambda}(\mathbb{S}^1)$ by $T_{\alpha,\beta}(f)=r_{\alpha}\circ f\circ r_{\beta}.$ A more precise version of the question we ask is:
\begin{quote}
 Given a dynamical property, how large is  the set of $f \in C_{\lambda}(\mathbb{S}^1)$ such that this property holds for {\em all} $T_{\alpha,\beta}(f)$.
 \end{quote}

This question indeed makes sense because $C_{\lambda}(\mathbb{S}^1)$ is closed under the operation $T_{\alpha,\beta}$. This is not true anymore in the setting of circle maps that preserve a non-atomic invariant measure $\mu\neq \lambda$ with full support.
It is also not true in the set of all continuous circle maps with dense set of periodic points, this result will be presented in a separate article which is under preparation.  

Furthermore, we note that in this paper we do not study the weaker version of the above question when {\em all} is replaced by {\em almost all}. 

We begin by studying topological dynamical properties. The strongest topological dynamical property in our setting is the {\em locally eventually onto (leo)} property (in the literature sometimes referred to by {\em topological exactness}).   
Our first result, which was surprising for us, is
\begin{restatable}[]{thm}{main}\label{thm:main}
There is an open dense set of maps $\mathcal{O}\subset C_{\lambda}(\Ci)$ such that:
\begin{enumerate}
\item\label{thm:main1} each $f\in \mathcal{O}$ is leo.
\item\label{thm:main2}for each pair $\alpha,\beta\in [0,1)$ and each $f\in \mathcal{O}$ the map $T_{\alpha,\beta}(f)\in \mathcal{O}$.
\end{enumerate}
\end{restatable}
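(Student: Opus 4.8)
The plan is to produce $\mathcal{O}$ as the set of maps satisfying a single robust condition that is visibly unchanged by pre- and post-composition with rotations. For an arc $J\subset\Ci$ write $\mathrm{osc}(f;J)$ for the oscillation of a lift of $f$ over $J$ (independent of the chosen lift); recall that $\mathrm{osc}(f;J)>1$ forces $f(J)=\Ci$, and that $\mathrm{osc}(f;J)$ depends lower semicontinuously on $f$, changing by at most $2\|f-g\|_\infty$. I would take
\[
\mathcal{O}:=\Bigl\{\,f\in C_\lambda(\Ci):\ \abs{\degr f}\ge 2\ \text{ and }\ \exists\,\rho\in(0,1)\ \text{s.t. }\mathrm{osc}(f;J)>1\ \text{for every arc }J\text{ with }\lambda(J)\ge\rho\,\Bigr\},
\]
i.e.\ $f$ has degree at least $2$ in absolute value and, past some length below $1$, every arc is mapped onto all of $\Ci$ with a definite margin. (I expect the real argument may need to build a little more into this condition; see the last paragraph.)

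Two of the three required properties are then immediate. First, $\mathcal{O}$ is open: $\abs{\degr f}\ge 2$ is a $C^0$-open condition, and for fixed $\rho$ the number $\inf\{\mathrm{osc}(f;J):\lambda(J)=\rho\}$ is attained (arcs of length $\rho$ form a compact family on which $J\mapsto\mathrm{osc}(f;J)$ is continuous), is lower semicontinuous in $f$, and bounds $\inf\{\mathrm{osc}(f;J):\lambda(J)\ge\rho\}$ from below; so if this infimum exceeds $1$ for $f$ it still does after a small perturbation. Second --- and this is the point of the theorem --- $\mathcal{O}$ is $T_{\alpha,\beta}$-invariant: $\degr(r_\alpha\circ f\circ r_\beta)=\degr f$, and a lift of $r_\alpha\circ f\circ r_\beta$ is $x\mapsto\alpha+F(x+\beta)$ for a lift $F$ of $f$, whence $\mathrm{osc}(r_\alpha\circ f\circ r_\beta;J)=\mathrm{osc}(f;J+\beta)$; as $J$ runs over all arcs of length $\ge\rho$ so does $J+\beta$, so $f\in\mathcal{O}$ iff $T_{\alpha,\beta}(f)\in\mathcal{O}$, with the same $\rho$. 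This is exactly conclusion \ref{thm:main2}: a rotation moves no arc and changes no oscillation, so a covering condition phrased in these terms is immune to it, unlike ``leo'' itself, which refers to iterates. (Had one only a $C^0$-open property $(\mathrm P)$ implying leo, one could instead set $\mathcal{O}:=\{f:T_{\alpha,\beta}(f)\text{ has }(\mathrm P)\text{ for all }\alpha,\beta\}$, which is $T_{\alpha,\beta}$-invariant since $T_{\gamma,\delta}\circ T_{\alpha,\beta}=T_{\gamma+\alpha,\delta+\beta}$ and open by the tube lemma, the rotation parameters living on a compact torus.)

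For density, recall that continuous piecewise affine Lebesgue measure-preserving circle maps (of each relevant degree) are dense in $C_\lambda(\Ci)$, so it suffices to show every such $g$ lies in $\mathcal{O}$ (after an arbitrarily small further perturbation if needed to remove degeneracies). If $G$ is a lift of $g$ with $\abs{\degr g}=d\ge 2$ and all slopes of absolute value at most $s$, then for an arc $J$ of length $1-\mu$ the lift $G$ varies by at most $s\mu$ over the complementary arc, so $\mathrm{osc}(G;J)\ge\mathrm{osc}(G;\Ci)-2s\mu\ge d-2s\mu$; choosing $\rho:=1-\tfrac{d-1}{2s}$ (indeed any $\rho$ close enough to $1$) gives $\mathrm{osc}(g;J)>1$ whenever $\lambda(J)\ge\rho$, so $g\in\mathcal{O}$. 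Hence $\mathcal{O}$ is dense.

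It remains to prove that each $f\in\mathcal{O}$ is leo, which is conclusion \ref{thm:main1}, and this is where the work lies. Fix an arc $J$ and set $A_n:=f^n(J)$, each an arc of $\Ci$ (or all of $\Ci$, in which case done). Measure-preservation gives $\lambda(f(B))\ge\lambda(B)$ for every Borel $B$, since $B\subseteq f^{-1}(f(B))$; hence $\lambda(A_n)$ increases to some $\ell$, and the crux is $\ell=1$. If not, pass to a subsequence with $A_{n_k}\to K$ in the Hausdorff metric, $K$ an arc of length $\ell<1$; by continuity $A_{n_k+1}=f(A_{n_k})\to f(K)$, so $\lambda(f(K))=\ell=\lambda(K)$. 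Since $f(K)$ is an arc of length $<1$, a lift of $f$ has oscillation exactly $\ell$ over $K$, hence oscillation at least $\abs{\degr f}-\ell\ge 2-\ell>1$ over the complementary arc $K^c$ of length $1-\ell$, so $f(K^c)=\Ci$. On the other hand $\lambda(f^{-1}(f(K)))=\lambda(f(K))=\lambda(K)$ together with $K\subseteq f^{-1}(f(K))$ forces $N:=\{x\in K^c:f(x)\in f(K)\}$ to be Lebesgue-null; combined with $\lambda(f(B))\ge\lambda(B)$ this gives $f(K^c\setminus N)=\Ci\setminus f(K)$ up to a null set, hence $f(N)\supseteq f(K)$ up to a null set --- so the null set $N$ has image of measure $\lambda(K)>0$. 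Ruling this out, i.e.\ knowing the relevant maps send Lebesgue-null sets to Lebesgue-null sets (or arguing around this point), gives $\ell=1$; then $\lambda(A_{n_0})>\rho$ for some $n_0$, so $A_{n_0}$ is an arc of length $>\rho$ and $A_{n_0+1}=f(A_{n_0})=\Ci$ by the definition of $\mathcal{O}$. Thus $f$ is leo. I expect the genuine obstacle to be concentrated here --- turning ``Lebesgue-preserving $+$ degree $\ge 2$'' into strict growth of every arc under iteration, i.e.\ excluding dynamically trapped arcs, and in particular handling the regularity point just flagged (possibly by strengthening the defining condition of $\mathcal{O}$); openness, $T_{\alpha,\beta}$-invariance and density are by comparison soft.
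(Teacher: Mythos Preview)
Your proposal has a fatal gap at the density step. The degree of a continuous circle map is locally constant in the $C^0$ topology, and $C_\lambda(\Ci)$ contains maps of every degree: rotations have degree $1$, the full tent map of Example~\ref{ex:tent} has degree $0$. Hence the condition $\abs{\degr f}\ge 2$ cuts out a proper \emph{clopen} subset of $C_\lambda(\Ci)$, and your $\mathcal{O}$ cannot be dense. The phrase ``piecewise affine $\ldots$ maps (of each relevant degree) are dense in $C_\lambda(\Ci)$'' hides exactly this: such maps are dense only among maps of the same degree, so proving they lie in $\mathcal{O}$ does not touch degree $0$ or $\pm1$. Dropping the degree hypothesis does not obviously rescue the argument either, since you use $\abs{\degr f}\ge 2$ essentially in the leo step (the inequality $\mathrm{osc}(f;K^c)\ge\abs{\degr f}-\ell>1$), and your density computation $\mathrm{osc}(g;J)\ge d-2s\mu$ gives nothing when $d\le 1$. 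The null-set issue you flag in the leo argument is a second, independent gap: generic maps in $C_\lambda(\Ci)$ are nowhere differentiable, so Luzin's $N$-property is not available, and the step ``$N$ null $\Rightarrow f(N)$ null'' genuinely fails in this class.

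The paper's proof avoids both problems by not seeking a single invariant condition. It fixes a dense family $\{h_n\}\subset\mathrm{PA}_\lambda(\Ci)$ with slope $>4$ and distinct critical values (no degree restriction), and for each $h_n$ extracts rotation-invariant quantitative constants $\delta_n,\eta_n,\xi_n,\kappa_n$ (Lemmas~\ref{lem:leo-exp} and~\ref{lem:compdist}) controlling strict arc-expansion, overshoot on long arcs, and separation of preimage components on short arcs. It then sets $\mathcal{O}=\bigcup_{\alpha,\beta}T_{\alpha,\beta}\bigl(\bigcup_n B(h_n,\eps_n)\bigr)$ with $\eps_n$ small relative to these constants, and shows directly that every $g\in B(h_n,\eps_n)$ is leo by combining the overshoot (long arcs map onto $\Ci$) with the preimage-separation (short arcs cannot be trapped) to force $\lambda(g^n(A))\to 1$. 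Your parenthetical alternative --- defining $\mathcal{O}$ via a $C^0$-open property $(\mathrm P)$ held by all $T_{\alpha,\beta}(f)$ and invoking the tube lemma --- is much closer in spirit to what the paper actually does; the substance is in producing such a $(\mathrm P)$ that is simultaneously open, rotation-invariant, leo-forcing, and densely satisfied, which is precisely the content of those two lemmas.
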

According to the usual hierarchy of topological dynamical properties, every map in $\mathcal{O}$ is also topologically mixing, topologically weakly mixing, totally transitive and transitive.

Notice that this theorem is stronger than the above mentioned result for interval exchange transformations \cite{AB}  in the sense that it holds for all $\alpha,\beta$, but weaker in the sense that it does not hold for all maps $f \in C_{\lambda}(\mathbb{S}^1)$. We show that such a statement can not hold for all $f\in C_{\lambda}(\mathbb{S}^1)$;
in Section \ref{sec:examp}
we give an explicit example of a map $f \in C_{\lambda}(\mathbb{S}^1)$ for which $T_{\alpha,\beta}(f)$ is not transitive for an open set
of $\alpha,\beta$ (Example \ref{ex:tent}).
This example also shows that $C_{\lambda}(\mathbb{S}^1)\setminus\mathcal{O}$ must be nonempty even in the {\em almost all} version of the question.

Next we ask the same question in the measure-theoretic framework. In \cite{BCOT}  we showed that there is a dense set of  non-ergodic maps in $C_{\lambda}(\mathbb{S}^1)$, thus we can not have a nonempty open set of maps satisfying nice mixing properties for all $\alpha,\beta$.
None the less we obtain the following (optimal) result.

\begin{restatable}[]{thm}{tfive}\label{t:5''}
There is a dense $G_{\delta}$ subset $\mathcal{G}$ of $C_{\lambda}(\mathbb{S}^1)$ such that
\begin{enumerate} \item each $g \in \mathcal{G}$ is weakly mixing with respect to $\lambda$, and
\item
for each pair $\alpha,\beta\in [0,1)$ and each $f\in \mathcal{G}$ the map $T_{\alpha,\beta}(f)\in \mathcal{G}$.
\end{enumerate}
\end{restatable}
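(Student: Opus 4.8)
The plan is to first reduce to outer rotations, which makes the $G_\delta$ and invariance properties essentially formal, and then to use Theorem~\ref{thm:main} together with classical smooth ergodic theory to get density. The key observation is that inner rotations are harmless up to conjugacy: for every $f\in C_\lambda(\mathbb{S}^1)$ and $\alpha,\beta\in[0,1)$ one has $T_{\alpha,\beta}(f)=r_{-\beta}\circ(r_{\alpha+\beta}\circ f)\circ r_{\beta}$, so $T_{\alpha,\beta}(f)$ is measure-theoretically conjugate, via the $\lambda$-preserving homeomorphism $r_\beta$, to $r_{\alpha+\beta}\circ f$; in particular $T_{\alpha,\beta}(f)$ is weakly mixing with respect to $\lambda$ if and only if $r_{\alpha+\beta}\circ f$ is. Since $\{\alpha+\beta\bmod 1:\alpha,\beta\in[0,1)\}=[0,1)$, the set
\[
\mathcal{G}:=\{f\in C_\lambda(\mathbb{S}^1): T_{\alpha,\beta}(f)\ \text{is}\ \lambda\text{-weakly mixing for all}\ \alpha,\beta\in[0,1)\}
\]
coincides with $\{f: r_\gamma\circ f\ \text{is}\ \lambda\text{-weakly mixing for all}\ \gamma\in[0,1)\}$. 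This $\mathcal{G}$ is invariant under every $T_{\alpha,\beta}$: if $f\in\mathcal{G}$ then for arbitrary $\gamma$ the map $r_\gamma\circ T_{\alpha,\beta}(f)=T_{\gamma+\alpha,\beta}(f)$ is conjugate to $r_{\gamma+\alpha+\beta}\circ f$, which is $\lambda$-weakly mixing because $f\in\mathcal{G}$; as $\gamma$ is arbitrary, $T_{\alpha,\beta}(f)\in\mathcal{G}$. Taking $\gamma=0$ shows each $f\in\mathcal{G}$ is $\lambda$-weakly mixing, so it only remains to prove $\mathcal{G}$ is a dense $G_\delta$.

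For the $G_\delta$ part, let $W\subset C_\lambda(\mathbb{S}^1)$ be the set of $\lambda$-weakly mixing maps. By \cite{BCOT} (or directly: $f\mapsto\lambda(f^{-n}A\cap B)$ is continuous on $C_\lambda(\mathbb{S}^1)$ for intervals $A,B$, because $f^n$ preserves $\lambda$, so combining this with weak mixing $\Leftrightarrow$ ergodicity of $f\times f$ yields a $G_\delta$ description) the set $W$ is $G_\delta$; write $W=\bigcap_k U_k$ with each $U_k$ open. For $f\in C_\lambda(\mathbb{S}^1)$ put $O(f):=\{r_\gamma\circ f:\gamma\in[0,1)\}$, a compact subset of $C_\lambda(\mathbb{S}^1)$ (the continuous image of the circle). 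Because each $r_\gamma$ is an isometry of $\mathbb{S}^1$, $d(r_\gamma\circ f,r_\gamma\circ g)=d(f,g)$ in the uniform metric $d$; hence if $O(f)\subset U_k$ then, by compactness of $O(f)$, some $\delta$-neighbourhood of $O(f)$ lies in $U_k$, and then $d(g,f)<\delta$ forces $r_\gamma\circ g\in U_k$ for all $\gamma$, i.e.\ $O(g)\subset U_k$. Thus $\{f:O(f)\subset U_k\}$ is open, and $\mathcal{G}=\{f:O(f)\subset W\}=\bigcap_k\{f:O(f)\subset U_k\}$ is $G_\delta$.

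For density we invoke Theorem~\ref{thm:main}. Let $\mathcal{E}$ be the set of piecewise linear maps in $C_\lambda(\mathbb{S}^1)$ all of whose slopes have absolute value at least $2$. Then $\mathcal{E}$ is dense in $C_\lambda(\mathbb{S}^1)$ (a routine approximation; cf.\ \cite{BCOT}), and composition with a rotation changes neither the piecewise linear structure nor the absolute values of the slopes, so $\mathcal{E}$ is closed under composition with rotations. Since $\mathcal{O}$ is open and dense, $\mathcal{O}\cap\mathcal{E}$ is dense in $C_\lambda(\mathbb{S}^1)$, and it suffices to check $\mathcal{O}\cap\mathcal{E}\subset\mathcal{G}$. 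Fix $g\in\mathcal{O}\cap\mathcal{E}$ and $\gamma\in[0,1)$. By Theorem~\ref{thm:main} the map $r_\gamma\circ g=T_{\gamma,0}(g)$ belongs to $\mathcal{O}$, hence is leo, and it lies in $\mathcal{E}$, hence is uniformly expanding and piecewise linear. For such a map the Lasota--Yorke inequality provides an absolutely continuous invariant probability measure with density of bounded variation; the support of such a density is a finite union of intervals, so any invariant set of positive measure contains an interval, which by the leo property is impossible unless that set is the whole circle. Therefore the absolutely continuous invariant measure is unique; since $\lambda$ is absolutely continuous and invariant it must be this measure, and, being the a.c.i.m.\ of a leo uniformly expanding map, it is exact, hence weakly mixing. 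So $r_\gamma\circ g$ is $\lambda$-weakly mixing for every $\gamma$, i.e.\ $g\in\mathcal{G}$, completing the proof.

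The crux — and the only genuinely delicate point — is the density step: "leo" and "$\lambda$-weakly mixing" need not coexist along an entire rotation orbit, and the role of the uniformly expanding piecewise linear maps is precisely that this class is stable under composition with rotations, while Theorem~\ref{thm:main} guarantees that $r_\gamma\circ g$ stays leo for all $\gamma$; together these pin down the a.c.i.m.\ of every $r_\gamma\circ g$ as $\lambda$ and make it exact. That expansion alone is not enough is witnessed by uniformly expanding piecewise linear circle maps preserving a half-circle, which are non-ergodic, so the leo input from Theorem~\ref{thm:main} is essential. The remaining ingredients — density of $\mathcal{E}$, the Lasota--Yorke theory with uniqueness and exactness of the a.c.i.m.\ for leo uniformly expanding maps, and the $G_\delta$-ness of $W$ in $C_\lambda(\mathbb{S}^1)$ — are standard.
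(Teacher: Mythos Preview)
Your proof is correct and takes a genuinely different route from the paper's.

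The paper does not use the conjugacy reduction $T_{\alpha,\beta}(f)\sim r_{\alpha+\beta}\circ f$, nor does it invoke a known ``$W$ is $G_\delta$'' result. Instead it builds $\mathcal{G}$ by hand: starting from a dense family $\{f_n\}\subset\mathrm{PA}_\lambda(\mathbb{S}^1)$ with distinct critical values and slope $>4$ (so that every $T_{\alpha,\beta}(f_n)$ is $\lambda$-exact by Corollary~\ref{cor:leo} and Theorem~\ref{exactness}), it uses Birkhoff averages $S^f_\ell h_j$ on $\mathbb{S}^1\times\mathbb{S}^1$, finds for each $(n,\bar\alpha,\bar\beta)$ a large set $E_{n,\bar\alpha,\bar\beta}$ and a time $\ell_{n,\bar\alpha,\bar\beta}$ on which the averages are close to the integral, thickens this to an open neighbourhood in $(\alpha,\beta,g)$, passes to a finite subcover of the torus to get a uniform $\varepsilon_n$, and sets $\mathcal{G}=\bigcap_N\bigcup_{\alpha,\beta}T_{\alpha,\beta}\big(\bigcup_{n\ge N}B(f_n,\varepsilon_n)\big)$. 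The invariance under $T_{\alpha,\beta}$ is then read off from this description, and weak mixing of each $g\in\mathcal{G}$ is deduced from the Birkhoff theorem together with the estimate holding along infinitely many times on a full-measure set.

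What each approach buys: your argument is shorter and more structural --- the conjugacy trick collapses the two-parameter family to one, and the compactness-of-orbit observation turns ``$W$ is $G_\delta$'' into ``$\mathcal{G}$ is $G_\delta$'' for free. The price is that you import more: the full strength of Theorem~\ref{thm:main} (only a dense, not open dense, set of stably-leo maps is actually needed for density), and an exactness statement for piecewise expanding leo maps. On this last point your Lasota--Yorke detour is unnecessary and a bit loosely argued (``any invariant set of positive measure contains an interval'' is not what BV density of the a.c.i.m.\ gives directly); you can simply cite Theorem~\ref{exactness}, since $r_\gamma\circ g$ is piecewise $C^2$, in $C_\lambda(\mathbb{S}^1)$, and topologically mixing. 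The paper's approach, by contrast, is self-contained once Corollary~\ref{cor:leo} and Theorem~\ref{exactness} are in hand and does not need Theorem~\ref{thm:main}, at the cost of a more explicit and longer construction.
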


Another aim of our research is to understand natural conditions when topological dynamical properties imply the corresponding measure theoretical properties. There is a nice survey article by Glasner and Weiss with the following description of the interplay between measure theoretic and topological dynamics:
{\em The two sister branches of the theory of dynamical systems called ergodic theory (or
measurable dynamics) and topological dynamics $\dots$ describe different
but parallel notions in their respective theories and the surprising fact is that many of
the corresponding results are rather similar \cite{GW}.}
 The next two results are of this type in our particular context. The smoothness assumptions in the next two theorems come directly from the paper of Li and Yorke \cite{LY} and Bowen \cite{Bowen} respectively.

\begin{restatable}[]{thm}{ppp}
\label{p:1}Let $f\in C_{\lambda}(\Ci)$ be a piecewise $C^2$ map with slope strictly greater than $1$.
Then $f$ is transitive if and only if $(f,\Ci,\mathcal{B},\lambda)$ is ergodic.
\end{restatable}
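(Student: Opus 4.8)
The plan is to prove the two implications separately; the forward implication (transitive $\Rightarrow$ ergodic) carries all the weight.

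For the easy direction, suppose $(f,\Ci,\B,\lambda)$ is ergodic. Since $\lambda$ has full support, given nonempty open sets $U,V\subseteq\Ci$ the set $W:=\bigcup_{n\ge 0}f^{-n}(U)$ satisfies $f^{-1}(W)\subseteq W$ and $\lambda(W)\ge\lambda(U)>0$; measure preservation forces $\lambda(W\setminus f^{-1}(W))=0$, so $W$ is invariant modulo $\lambda$ and hence has full measure by ergodicity, so it is dense and meets $V$. Thus $f^n(V)\cap U\neq\emptyset$ for some $n\ge 0$, i.e. $f$ is topologically transitive.

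For the converse I argue by contraposition: assuming $f$ is not ergodic I produce two nonempty open sets which cannot be mapped onto one another. Here the hypotheses enter: since $f$ is piecewise $C^2$ with finitely many branches and $|f'|$ bounded away from $1$, the transfer operator of $f$ acts quasicompactly on functions of bounded variation, so (the Lasota--Yorke theory, and the structural refinements going back to Li and Yorke \cite{LY}) the a.c.\ invariant probability measure $\lambda$ is a finite convex combination $\lambda=\sum_{i=1}^{k}c_i\mu_i$ of ergodic a.c.\ invariant probability measures; each $\mu_i$ has a density of bounded variation that is bounded below by a positive constant on its support $S_i:=\supp\mu_i$, up to a $\lambda$-null set $S_i$ is a finite union of nondegenerate closed intervals, the $S_i$ have pairwise finite intersection, and $f(S_i)\subseteq S_i$ (the last being automatic for the topological support of any invariant measure). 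If $f$ is not ergodic then $k\ge 2$; since $\mu_i\ll\lambda$ and the $S_i$ are essentially disjoint one gets $\mu_j(S_i)=0$ for $j\neq i$ and hence $\lambda(S_i)=c_i$, so in particular $S_1$ is a proper closed subset of $\Ci$ with nonempty interior.

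It remains to extract the contradiction, which is now elementary. Put $V_1:=\Int S_1\setminus S_2$ and $V_2:=\Int S_2\setminus S_1$; since $\Int S_1,\Int S_2\neq\emptyset$ and $S_1\cap S_2$ is finite, both $V_1$ and $V_2$ are nonempty open sets. As $f(S_1)\subseteq S_1$, every $x\in V_1\subseteq S_1$ has $f^n(x)\in S_1$ for all $n\ge 0$, hence $f^n(x)\notin V_2$ because $V_2\cap S_1=\emptyset$. Therefore no $n\ge 0$ satisfies $f^n(V_1)\cap V_2\neq\emptyset$, so $f$ is not transitive, completing the contraposition. The main obstacle is the invocation in the third paragraph of the fine structure of a.c.\ invariant measures of piecewise expanding maps --- that the ergodic components have densities bounded away from $0$ on their supports and that these supports are finite unions of intervals meeting in finitely many points; this is exactly where $C^{2}$ smoothness and the expansion $|f'|>1$ are used, and it is why the statement fails for merely continuous or non-expanding transitive circle maps. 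A minor subtlety is the ``$\bmod\ \lambda$'' bookkeeping needed to pass from measure-theoretic supports to honestly closed, forward-invariant sets with nonempty interior, which is legitimate precisely because the invariant densities are of bounded variation and bounded below on their supports.
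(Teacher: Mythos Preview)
Your proof is correct, and the overall strategy---reducing the hard implication to the Li--Yorke structure theory for absolutely continuous invariant measures of piecewise $C^2$ expanding maps---is the same as the paper's. The packaging differs, however, and the difference is worth noting.

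For the easy direction the paper simply observes that $\lambda$-generic points have dense orbits; your argument via the invariant set $\bigcup_{n\ge 0}f^{-n}(U)$ is an equally standard alternative.

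For the substantive direction the paper argues \emph{directly}: it cites \cite[Theorem~1]{LY} to assert that there is a unique a.c.i.m.\ (necessarily $\lambda$), and then observes that a nontrivial invariant set $A$ would yield, via \cite[Lemma~2.2]{LY}, a second a.c.i.m.\ with density $\chi_A/\lambda(A)$, contradicting uniqueness. You instead run the contrapositive: if $\lambda$ is not ergodic you invoke the full ergodic decomposition $\lambda=\sum c_i\mu_i$ coming from the Lasota--Yorke/Li--Yorke spectral theory, use that the supports $S_i$ are forward-invariant finite unions of intervals with pairwise finite intersection, and then explicitly exhibit disjoint open sets $V_1\subset S_1$, $V_2\subset S_2$ with $f^n(V_1)\cap V_2=\emptyset$ for all $n$.

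Both routes rest on the same black box from \cite{LY}. The paper's version is shorter but leaves implicit exactly where transitivity enters (it is hidden inside the applicability of the uniqueness statement). Your version is a bit longer but makes the role of transitivity completely transparent and produces concrete witnesses to non-transitivity. The ``$\bmod\ \lambda$'' caveat you flag is real but, as you note, harmless because the densities are of bounded variation and the supports are genuine finite unions of closed arcs.
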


It is well known that measure-theoretic exactness implies measure-theoretic strong mixing, and since $\lambda$ is positive on open sets this implies topological mixing.  The next result is a partial converse to this, and is an important ingredient of the proof of Theorem \ref{thm:main2}.

\begin{restatable}[]{thm}{exact}\label{exactness}
Let $f\in C_{\lambda}(\mathbb{S}^1)$ be a piecewise $C^2$ topological mixing map. Then $(f,\Ci,\mathcal{B},\lambda)$ is measure-theoretically exact (thus also strongly and weakly mixing). 
\end{restatable}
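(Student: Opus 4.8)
The plan is to prove that $(f,\Ci,\mathcal B,\lambda)$ is exact; strong mixing and weak mixing then follow from the standard hierarchy of asymptotic properties. The first step is to record that Lebesgue-preservation already forces $|f'|\ge 1$ everywhere. Indeed, invariance of $\lambda$ is equivalent to $\mathcal{L}_f\mathbf 1=\mathbf 1$ for the transfer (Perron--Frobenius) operator $\mathcal{L}_f h(x)=\sum_{f(y)=x}h(y)/|f'(y)|$, so for a.e.\ $x$ the finitely many nonnegative numbers $1/|f'(y)|$, $y\in f^{-1}(x)$, sum to $1$; each is therefore at most $1$, whence $|f'|\ge1$ a.e., and then everywhere by piecewise continuity of $f'$. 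In particular $f$ has no critical points and no attracting periodic orbits.

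The second, and main, step is to deduce from topological mixing that some iterate of $f$ is uniformly expanding. First I claim $f$ has no neutral periodic orbit. If $p$ had period $k$ with $|(f^k)'(p)|=1$, then $|f'|\equiv1$ along the orbit of $p$ (since $|f'|\ge1$), and passing to a suitable power we may assume $p$ is a fixed point with $f'(p)=1$ lying in the interior of a smooth lap. Comparing the Taylor expansion of $f$ at $p$ with the inequality $|f'|\ge1$, valid on both sides of $p$, forces every coefficient of $f-\mathrm{id}$ beyond the linear one to vanish, or else $f$ to be strictly repelling near $p$ with $|f(x)-p|>|x-p|$ for $x\ne p$ close to $p$. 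In the first case $f\equiv\mathrm{id}$ on a neighbourhood of $p$, contradicting transitivity. In the second case, either a neighbourhood $W$ of $p$ satisfies $f^{-1}(W)\subseteq W$, so $\Ci\setminus W$ is a proper nonempty closed forward-invariant set — impossible for a transitive map — or some other branch of $f$ also covers a neighbourhood of $p$; but then, for $v\to p$, the local preimage contributes $1/|f'|\to1$ to $\mathcal{L}_f\mathbf 1(v)$ while the remaining preimage(s) contribute at least $1/\sup|f'|>0$, contradicting $\mathcal{L}_f\mathbf 1=\mathbf 1$. Hence $f$ has no neutral periodic orbit, and since it also has no critical points and no attracting periodic orbits, a Mañé-type argument yields $N\in\mathbb N$ and $\Lambda>1$ with $|(f^N)'|\ge\Lambda$ everywhere.

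The third step invokes the classical theory. Put $g:=f^N$; then $g\in C_\lambda(\Ci)$ is piecewise $C^2$ with $|g'|\ge\Lambda>1$, and $g$ is topologically mixing because $f$ is. For such a uniformly expanding, piecewise $C^2$, topologically mixing map the transfer operator $\mathcal{L}_g$ satisfies a Lasota--Yorke inequality on $BV(\Ci)$, hence is quasi-compact there; topological mixing forces $1$ to be a simple eigenvalue with no other eigenvalue on the unit circle, so $\mathcal{L}_g$ has a spectral gap and $\|\mathcal{L}_g^n h\|_{L^1(\lambda)}\to0$ for every mean-zero $h\in L^1(\lambda)$ — equivalently, $(g,\Ci,\mathcal B,\lambda)$ is exact; alternatively one quotes directly the exactness (indeed Bernoulli) conclusions of Li--Yorke \cite{LY} and Bowen \cite{Bowen}, whose smoothness hypotheses are now met. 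Finally, exactness of $g=f^N$ implies exactness of $f$, since $\bigcap_{n\ge0}f^{-n}\mathcal B\subseteq\bigcap_{n\ge0}(f^N)^{-n}\mathcal B$; and exactness of $f$ implies $f$ is strongly, hence weakly, mixing.

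The last step is routine given the literature; the principal obstacle is the second step — extracting eventual uniform expansion from topological mixing together with Lebesgue-preservation, since, unlike Theorem~\ref{p:1}, the hypotheses contain no slope condition. The delicate points there are controlling the (a priori non-isolated) set $\{|f'|=1\}$, the exclusion of neutral periodic orbits sketched above, and carrying out the Mañé-type argument in the piecewise-smooth category, where turning points and one-sided derivatives require extra care.
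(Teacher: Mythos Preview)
Your strategy diverges substantially from the paper's, and the divergence is where the gap lies. The paper's proof is a short citation chain: from topological mixing and $\lambda$-preservation one gets $\lambda(f^n(U))\to 1$ for every arc $U$ (since $\lambda(f^n(U))$ is nondecreasing and any subsequential Hausdorff limit of the arcs $f^n(U)$ with measure $<1$ would have an open complementary arc missed infinitely often, contradicting mixing); this is exactly the hypothesis Bowen needs, and Bowen's theorems for piecewise $C^2$ maps with an a.c.i.p.\ apply directly because $|f'|\ge 1$ already yields Adler's bounded-distortion condition $\sup|f''|/(\inf|f'|)^2<\infty$. No uniform expansion is required, so weak mixing and the weakly Bernoulli property of the natural extension follow at once, and exactness drops out via Shields and standard facts.

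Your route instead tries to manufacture eventual uniform expansion $|(f^N)'|\ge\Lambda>1$ and only then invoke Lasota--Yorke or Bowen. The problem is that this intermediate step is neither standard nor clearly true in the piecewise-$C^2$ category. First, your Taylor dichotomy at a neutral fixed point (``all higher coefficients vanish or strictly repelling'') is unavailable with only $C^2$ regularity: once you force $f''(p)=0$ you have no further control, and $f'-1$ can vanish to infinite order, oscillate, or be identically zero on one side only. Second, and more seriously, Ma\~n\'e's hyperbolicity theorem is a statement about $C^2$ maps without critical points; here the turning points are genuine singularities of the derivative, and passing from ``$|f'|\ge 1$ with no neutral periodic orbits'' to ``some iterate is uniformly expanding'' in the presence of turning points is not a result one can cite---you would need to control recurrence to the (possibly infinite) set $\{|f'|=1\}$ together with the finite set of turning points, and you have not done so. Indeed it is not clear that every topologically mixing $f\in C_\lambda(\mathbb S^1)$ with piecewise $C^2$ regularity has a uniformly expanding iterate at all. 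Since Bowen's theorem already handles the non-uniformly expanding case via bounded distortion, the detour through Ma\~n\'e is both unnecessary and the source of the gap; your Step~3 is fine once you have uniform expansion, but you have not established it.
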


 From the proofs it is immediate that Theorems \ref{p:1} and  \ref{exactness} also hold in the setting of $C_{\lambda}([0,1])$, which we also could not find in the literature.

To prove Theorem \ref{thm:main1}  we prove several intermediary results of independent interest on special dense classes of maps in $C_{\lambda}(\mathbb{S}^1)$.  The following result is a corollary of the  technical Lemma \ref{lem:leo-exp}. Again notice that 
 if $f$ satisfies the assumptions of the  corollary  then $T_{\alpha,\beta}(f)$ verifies them for all $\alpha,\beta \in [0,1)$.

\begin{restatable}[]{cor}{corleo}\label{cor:leo}
If a piecewise affine Lebesgue measure-preserving circle map has different critical values and slope strictly greater than $4$ then it is leo.
\end{restatable}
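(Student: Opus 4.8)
The efficient route is to check that a map $f$ as in the statement satisfies the hypotheses of the technical Lemma~\ref{lem:leo-exp}; let me instead sketch the mechanism behind it, since this explains the role of the two assumptions. Write $\lambda_0>4$ for the infimum of $|f'|$ over the points of differentiability, and let $c_0,\dots,c_{m-1}$ be the turning points of $f$, with laps (maximal monotonicity intervals) $L_0,\dots,L_{m-1}$. On the circle the number of turning points is even, and if $m=0$ then $f$ is a monotone Lebesgue-preserving circle map with $|f'|>1$, hence an expanding covering map of degree $\ge 2$, which is plainly leo; so assume $m\ge 2$. Since $f$ is leo precisely when every nonempty open arc $J$ has some iterate $f^n(J)=\Ci$, fix such a $J$ and follow its orbit.

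\emph{Expansion step.} Put $\ell=\min_i|L_i|>0$. If an arc $K$ meets at most one turning point, then $f(K)$ is again an arc; splitting $K$ at that turning point, the longer sub-arc has length $\ge|K|/2$ and maps injectively, so its image — a sub-arc of $f(K)$ — has length $>\lambda_0|K|/2>2|K|$, whence $|f(K)|>2|K|$. Any arc of length $<\ell$ meets at most one turning point, so while $f^k(J)$ is a proper arc meeting at most one turning point its length at least doubles; as lengths stay $\le 1$, there is a least $N$ with either $f^N(J)=\Ci$ (done) or $f^N(J)$ a proper arc meeting at least two turning points. In the latter case $f^N(J)$ contains two consecutive turning points, hence a whole lap $L_i$, so $f^{N+1}(J)\supseteq f(L_i)=:I_i$, an arc of length $>4\ell$ whose two endpoints are critical values of $f$.

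\emph{Covering step.} Iterating this, if no $f^n(J)$ equals $\Ci$ then $f^n(J)$ is a proper arc containing a full lap for infinitely many $n$; with only finitely many laps available, a pigeonhole argument shows that the orbit of $J$ is eventually trapped in a proper $f$-invariant closed arc $W\subsetneq\Ci$ (or a proper invariant finite union of arcs). Since $f$ preserves $\lambda$, the inclusion $W\subseteq f^{-1}(W)$ together with $\lambda(f^{-1}(W))=\lambda(W)$ forces $f^{-1}(W)=W$ up to a null set, so the complementary open arc $A=\Ci\setminus W$ is $f$-invariant mod $0$; taking closures one gets $f(\overline A)=\overline A$. Analysing $f$ near $\partial A$ — whose points are critical values and/or fixed points of $f$ — and using that the critical values are pairwise distinct now yields a contradiction (an expanding turning point or fixed point sitting on the boundary of an invariant arc, together with the absence of a second turning point with the same image, is impossible). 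Hence some $f^n(J)=\Ci$ and $f$ is leo; this is exactly what Lemma~\ref{lem:leo-exp} encapsulates.

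\emph{Main obstacle.} The real work is the covering step. The expansion estimate alone never prevents the orbit of $J$ from getting trapped inside a proper invariant union of arcs, and it is only here that the two hypotheses genuinely bite: the slope bound gives the strict expansion (by a factor $>2$) of once-folded arcs that powers the expansion step, while the distinctness of the critical values is what rules out a nontrivial invariant sub-arc. Carrying this out carefully is the content of Lemma~\ref{lem:leo-exp}, and given that lemma the corollary follows at once.
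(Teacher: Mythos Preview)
Your bottom line—that the corollary follows at once from Lemma~\ref{lem:leo-exp}—is correct and is exactly the paper's proof. But you misdescribe what that lemma actually says and how it is proved. Part~(1) of the lemma gives a uniform constant $\delta>0$ such that for \emph{every} arc $A$ either $h(A)=\Ci$ or $\lambda(h(A))>(1+\delta)\lambda(A)$; iterating, any arc reaches $\Ci$ in finitely many steps, and that is the entire deduction of leo. The lemma itself is proved by a compactness argument: assume $\delta=0$, extract a limit arc $A'$ with $\lambda(h(A'))=\lambda(A')$, and reach a contradiction using that every point of $\Ci$ has at least four $h$-preimages (this is where slope $>4$ and distinct critical values enter, via Lemma~\ref{l:6}). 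There is no invariant-arc or boundary analysis in Lemma~\ref{lem:leo-exp}.

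Your ``covering step'' is therefore not a sketch of the lemma but a genuinely different route, and as written it has gaps. The pigeonhole claim—that infinitely many $f^n(J)$ each containing some lap forces the orbit into a proper $f$-invariant closed arc—does not follow: the arcs $f^n(J)$ can drift around the circle while their lengths increase toward some $L<1$, and repeatedly containing the same lap $L_i$ does not by itself produce a fixed or periodic arc. (The paper does obtain periodic-arc conclusions elsewhere, in Theorem~\ref{periodn}, but via structural results for transitive graph maps, not by pigeonhole.) The subsequent boundary analysis at $\partial A$ is only asserted, not carried out. Your expansion step, by contrast, is correct and matches the paper's observation that an arc meeting at most one turning point more than doubles in length; but the role you assign to the distinct-critical-values hypothesis—ruling out an invariant sub-arc by local analysis at its endpoints—is not how the paper uses it.
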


Another important ingredient in the proof of Theorem \ref{thm:main} is the complete geometric description of {\em periodic arcs} for sufficiently expanding piecewise monotone maps in $C_{\lambda}(\mathbb{S}^1)$ (Theorem \ref{periodn}).

All our techniques are a priori of one-dimensional nature, but we wonder if similar results still hold in higher dimensions.
In particular, it would be interesting to know which dynamical properties hold generically or on open sets of continuous non-invertible volume preserving maps, and if they commute with rotations on higher dimensional tori.

Dynamical and topological properties of typical continuous Lebesgue measure-preserving maps in the one dimensional setting have been studied in
\cite{Bo,BT,BCOT,BCOT2,CO,SW}. In particular, in the setting of $C_{\lambda}(\Ci)$ we completely characterized the sets of periodic points of typical maps  \cite{BCOT} and showed in \cite{BCOT2} a surprising result that a strong version of shadowing property, called the s-limit shadowing property, is generic.  We have not investigated if these properties commute with rotations.

\section{Preliminaries}\label{sec:prelim}

\subsection{Notation and background material} Let $\N:=\{1,2,3,\ldots\}$ denote the set of {\em natural numbers} and $\Z:=\{\ldots,-1,0,1,\ldots\}$ the set of integers. We denote by $\mathbb{S}^1$ the {\em unit circle}. For sets $A,A'\in \mathbb{S}^1$ we use notation $\dist(A,A')$ to denote the {\em Euclidean distance on $\mathbb{S}^1$ between sets $A$ and $A'$}.
Throughout the article we will identify $\mathbb{S}^1$ with the interval $[0,1)$. Let $\lambda$ denote \emph{Lebesgue measure} on the unit interval $[0,1]$ and by the abuse of notation we also denote by $\lambda$ the \emph{normalized Lebesgue measure} on $\Ci$. We denote by $C_{\lambda}(\mathbb{S}^1)\subset C(\mathbb{S}^1)$ the family of all {\em continuous Lebesgue measure-preserving maps} of $\mathbb{S}^1$ being a proper subset of the family of all continuous circle maps equipped with the \emph{uniform metric} $\rho$:
$$\rho (f,g) := \sup_{x \in \mathbb{S}^1} |f(x) - g(x)|.$$

It follows from Proposition 4 from \cite{BCOT} that $(C_{\lambda}(\mathbb{S}^1),\rho)$ is a complete metric space (formally this theorem was stated for the interval and not the circle, but the proof in the circle case  is analogous).

By {\em arc} we mean any subset of $\Ci$ which is a  homeomorphic image of $[0,1]$.
 We denote by $\mathcal A$ the set of all subarcs of $\mathbb S^1$. 
 
 \subsection{Dynamical properties}
 We say that $f$ has a {\em periodic arc $A \in \mathcal A$ of {\it minimal} period $n\in\N$} if
\begin{equation}\label{e:1a}
 f^{i}(A)\neq f^{j}(A) \text{ for }0\le i<j<n \text{ and } f^n(A)=A.
 \end{equation}

A map $f\in C_{\lambda}(\mathbb S^1)$ is called
\begin{itemize}
\item \textit{transitive} if for each pair of nonempty open sets $U,V\subset \Ci$ there is $n\ge 0$ such that $f^n(U)\cap V\neq\emptyset$,
\item \textit{totally transitive} if $f^n$ is transitive for all integers $n\geq1$.
\item \textit{topologically mixing} if for each pair of nonempty open sets $U,V\subset \Ci$ there is $n_0\geq0$ such that $f^n(U)\cap V\neq\emptyset$ for every $n\ge
    n_0$,
\item  \textit{leo} (\textit{locally eventually onto}) if for every nonempty open set $U\subset \Ci$ there is $n\in{\mathbb N}$ such that $f^n(U)=\mathbb S^1$.
\end{itemize}
Let  $\mathcal{B}$ denote the Borel sets in $\mathbb{S}^1$.
The measure-preserving transformation  $(f,\mathbb{S}^1,\mathcal{B},\lambda)$ is
\begin{itemize}
\item \textit{ergodic} if for each $A\in \mathcal{B}$, $f^{-1}(A)=A$ $\lambda$-a.e. implies $\lambda(A)=0$ or $\lambda(A^c)=0$.
\item  \textit{weakly mixing}, if for every $A,B\in\B$,
$$\lim_{n\to\infty}\frac{1}{n}\sum_{j=0}^{n-1}\vert \lambda(f^{-j}(A)\cap B)-\lambda(A)\lambda(B)\vert=0.$$
\item {\em measure-theoretically exact} if for every set $A\in \cap_{n\geq 0}f^{-n}(\mathcal{B})$ it holds that $\lambda(A)\lambda(A^{c})=0$.
\end{itemize}

\subsection{Piecewise monotone maps}
We say that a piecewise monotone map $f\in C_{\lambda}(\Ci)$ has {\em slope} $> s$ if $\left | \frac{f(x)-f(y)}{x-y} \right |>s$ whenever $x\neq y$ are points on an arc of monotonicity of $f$.

A \emph{critical point} (or {\em turning point}) of $f\in C(\Ci)$ is a point $x\in \Ci$ such that $f(x)$ is a strict local extremum. Denote by $\crit(f)$ the set of all critical points of $f$.  Let $\mathrm{PA}(\mathbb{S}^1)\subset C(\mathbb{S}^1)$ denote the set of \emph{piecewise affine} 
maps; i.e., there exists a finite partition of $\mathbb{S}^1$ so that the map is affine on each element of the partition. Note that the endpoints of affine pieces are not necessarily critical points by our definition.
 Let $\mathrm{PA}_{\lambda}(\mathbb{S}^1)\subset C_{\lambda}(\mathbb{S}^1)$ denote the set of {\em piecewise affine maps that preserve Lebesgue measure}.

\subsection{Rotations and liftings} For $\alpha\in [0,1)$ we define the map $r_{\alpha}\colon~\Ci\to \Ci$ as
$$r_{\alpha}(x)=x+\alpha\pmod1,~x\in\Ci$$
and for every $(\alpha,\beta)\in [0,1)^2$ the map $T_{\alpha,\beta}\colon~C_{\lambda}(\Ci)\to C_{\lambda}(\Ci)$ by
$$T_{\alpha,\beta}(f)=r_{\alpha}\circ f\circ r_{\beta}.$$
When there is no confusion about the map that we use we simply write $T_{\alpha,\beta}=T_{\alpha,\beta}(f)$.

Note that for any $x\in \mathbb{S}^1$ and any $\alpha,\beta\in [0,1)$ we have 
$$|T_{\alpha,\beta}(f)(x)-T_{\alpha,\beta}(g)(x)|=|f(r_{\beta}(x))-g(r_{\beta}(x))|.$$
Thus $\rho(T_{\alpha,\beta}(f),T_{\alpha,\beta}(g))=\rho(f,g)$. Therefore, $T_{\alpha,\beta}$ is an isometry.

Consider a continuous map $f\colon~\mathbb S^1\to \mathbb S^1$ of
\emph{degree} $\degr(f)\in\Z$. Let $F\colon~\R\to\R$ be a \emph{lifting} of $f$, i.e., a continuous map
for which
\begin{equation}\label{e:10}
\phi\circ F = f\circ\phi\text{ on }\R,
\end{equation}
where $\phi\colon~\R\to\Ci$ is defined by $\phi(x) = x \pmod{1}$ 
and $F(x + 1) = F(x) + \degr(f)$ for each $x\in\R$.  Note that two liftings of $f$ differ by an integer constant.

 \subsection{Window perturbations} 
 
 Fix an arc $A \in \A$.
 Let $\{A_i \in \A: 1 \le i \le m \}$, where $m$ is odd, be a finite collection of arcs satisfying $\cup^{m}_{i=1} A_i = A$ and $\Int(A_i) \cap \Int(A_j) = \emptyset $ when $i \ne j$, we will refer to this as a {\em partition} of $A$.
 
 Fix $f \in C_{\lambda}(\mathbb{S}^1)$
 an arc $A \in \A$ and a partition of $A$.
 A map $h = h_{A,\{A_i\}}$ is {\em an $m$-fold window perturbation of $f$ with respect to $A$ and the partition $\{A_i\}$} if
 \begin{itemize}
     \item $h|_{A^c} = f|_{A^c}$ 
     \item for each $1 \le i \le m$ the map $h|_{A_i}$ is an affinely scaled copy of $f|_{A}$ with the orientation reversed for every second $i$, with  $h|_{A_1}$ having the same orientation as $f|_{A}$.
 \end{itemize}

The essence of this definition is illustrated by Figure~\ref{fig:perturb}.

We call an $m$-fold window perturbation of $f$ {\em regular} if all of the $A_i$'s have the same length.

\begin{figure}[!ht]
	\centering
	\begin{tikzpicture}[scale=3.5]
	\draw (0,0)--(0,1)--(1,1)--(1,0)--(0,0);
	\draw[thick] (0,1)--(1/2,0)--(1,1);
	\node at (1/2,1/2) {$f$};
	\node at (5/16,-0.1) {$a$};
	\node at (5/8,-0.1) {$b$};
	\node at (0,-0.1) {$0$};
	\node at (1,-0.1) {$1$};
	\node at (-0.1,1) {$1$};
	\draw[dashed] (5/16,0)--(5/16,3/8)--(5/8,3/8)--(5/8,0);
	\node[circle,fill, inner sep=1] at (5/16,3/8){};
	\node[circle,fill, inner sep=1] at (5/8,1/4){};
	\end{tikzpicture}
	\hspace{1cm}
	\begin{tikzpicture}[scale=3.5]
	\draw (0,0)--(0,1)--(1,1)--(1,0)--(0,0);
	\draw[thick] (0,1)--(5/16,3/8)--(5/16+3/48,0)--(5/16+5/48,1/4)--(5/16+7/48,0)--(5/16+10/48,3/8)--(5/16+13/48,0)--(5/8,1/4)--(1,1);
	\draw[dashed] (5/16,0)--(5/16,3/8);
	\draw[dashed] (5/16+10/48,0)--(5/16+10/48,3/8);
	\draw[dashed] (5/16+5/48,3/8)--(5/16+5/48,0);
	\node at (1/2,1/2) {$h$};
	\node at (0,-0.1) {$0$};
	\node at (1,-0.1) {$1$};
	\node at (-0.1,1) {$1$};
	\node at (5/16,-0.1) {$a$};
	\node at (5/8,-0.1) {$b$};
	\draw[dashed] (5/16,3/8)--(5/8,3/8)--(5/8,0);
	\node[circle,fill, inner sep=1] at (5/16,3/8){};
	\node[circle,fill, inner sep=1] at (5/16+5/48,1/4){};
	\node[circle,fill, inner sep=1] at (5/16+10/48,3/8){};
	\node[circle,fill, inner sep=1] at (5/8,1/4){};
	\end{tikzpicture}
	\caption{For  $f\in C_{\lambda}(\mathbb{S}^1)$ shown on the left, we show on the right the graph of $h$ which is
 the regular $3$-fold piecewise affine window perturbation of $f$ on the interval $[a,b]$.}\label{fig:perturb} 
\end{figure}
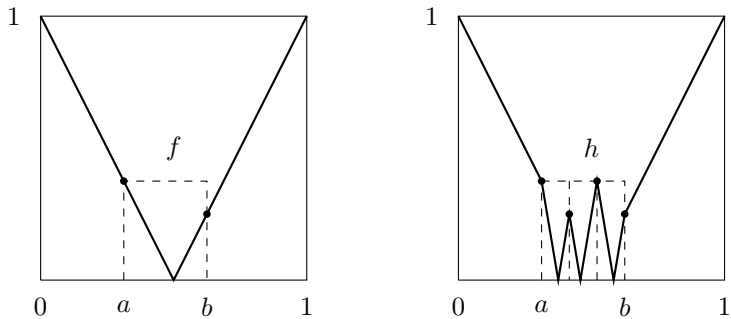

We collect some facts about window perturbations which were proven or discussed in \cite{BT},\cite{BCOT}.
Suppose $A = [a,b]$, and $h = h_{[a,b]}$ is an $m$-fold window perturbation of $f \in C_{\lambda}(\mathbb{S}^1)$ with respect to an arbitrary partition
$\{A_i\}$  of $A$. Then

\begin{enumerate}
    \item $h$ preserves $\lambda$.
    \item if $m$ is odd then $h$ is continuous,
    and thus $h\in C_{\lambda}(\mathbb{S}^1)$.
    \item  $\rho (h|_{[a,b]}, f) \to 0$ when  $|b-a| \to 0$.
    \item if $f\in \mathrm{PA}_{\lambda}(\mathbb{S}^1)$ then $h\in \mathrm{PA}_{\lambda}(\mathbb{S}^1)$. 
\end{enumerate}

\section{Motivational examples}\label{sec:examp}
In this section we give several examples that initially motivated our study.

\begin{example}\label{ex:tent}
Consider the full tent map $f$ (viewed as a circle map). Obviously, $f\in C_{\lambda}(\mathbb{S}^1)$. For $\alpha< -\beta \pmod1$ and $\alpha+\beta> -\frac{1}{2} \pmod1$ we define the interval $J=J_{\alpha,\beta}:=[-2\beta-\alpha,1+\alpha] \pmod1$. We obtain that $T_{\alpha,\beta}(J)=J$. Therefore, $T_{\alpha,\beta}$ is not transitive for an open subset of $(\alpha,\beta)\in[0,1)^2$.

\end{example}

\begin{figure}[!ht]
	\centering
	\begin{tikzpicture}[scale=3.5]
	\draw (0,0)--(0,1)--(1,1)--(1,0)--(0,0);
	\draw[thick] (0,0)--(1/2,1)--(1,0);
	\node at (1/2,1/4) {$f$};
	\node at (7/32,-0.1) {\small $-\alpha-\beta$};
	\node at (24/32,-0.1) {\small $1+\alpha+\beta$};
	\node at (1/2,-0.13) {};
	\draw[dotted] (0,0)--(1,1);
	\draw[dashed] (1/4-1/32,0)--(1/4-1/32,1);
    \draw[dashed] (3/4+1/32,0)--(3/4+1/32,1);
    \draw[dashed] (0,1/2-1/16)--(1,1/2-1/16);
    \node[circle,fill, inner sep=1] at (1/4-1/32,0){};
	\node[circle,fill, inner sep=1] at (3/4+1/32,0){};
	\node[circle,fill, inner sep=1] at (1,1/2-1/16){};
	\node[circle,fill, inner sep=1] at (1,1){};
	\draw[thick] (1/4-1/32,0)--(3/4+1/32,0);
	\draw[thick] (1,1/2-1/16)--(1,1);
	\node at (0,-0.1) {$0$};
	\node at (1,-0.1) {$1$};
	\node at (-0.1,1) {$1$};
	\end{tikzpicture}
	\hspace{1cm}
	\begin{tikzpicture}[scale=3.5]
	\draw (0,0)--(0,1)--(1,1)--(1,0)--(0,0);
	\draw[thick] (1/8+1/32,0)--(19/32,3/4+1/8)--(1,1/16);
	\draw[dotted] (0,0)--(1,1);
	\node at (1/4+1/16,-0.1) {\small $-2\beta-\alpha$};
	\node at (1/2+1/12,-0.1) {$J$};
	\node at (3/4+1/16,-0.1) {\small $1+\alpha$};
	\draw[dashed] (1/4+1/16,0)--(1/4+1/16,1);
	\draw[dashed] (0,1/16)--(1,1/16);
    \draw[dashed] (3/4+1/8,0)--(3/4+1/8,1);
    \draw[dashed] (0,1/4+1/16)--(1,1/4+1/16);
    \draw[dashed] (0,3/4+1/8)--(1,3/4+1/8);
    \draw[thick] (1/4+1/16,0)--(3/4+1/8,0);
    \draw[thick] (1,1/4+1/16)--(1,3/4+1/8);
    \node at (1.3,1/2+1/16) {$T_{\alpha,\beta}(J)=J$};
    \node at (1.2,28/32) {$1+\alpha$};
     \node[circle,fill, inner sep=1] at (1/4+1/16,0){};
	\node[circle,fill, inner sep=1] at (3/4+1/8,0){};
	\node[circle,fill, inner sep=1] at (1,1/4+1/16){};
	\node[circle,fill, inner sep=1] at (1,3/4+1/8){};
	\node at (0,-0.1) {$0$};
	\node at (1,-0.1) {$1$};
	\node at (-0.1,1) {$1$};
	\draw[dashed] (1/32,0)--(1/32,1);
	\draw[dashed] (5/32,0)--(5/32,1);
	\draw[thick] (0,1/16)--(1/32,0);
	\draw[thick] (5/32,1)--(3/32,14/16)--(1/32,1);
	\end{tikzpicture}
	\caption{ $\alpha=-1/8$ and $\beta=-3/32$ from Example~\ref{ex:tent}.}\label{fig:perturbations}
\end{figure}
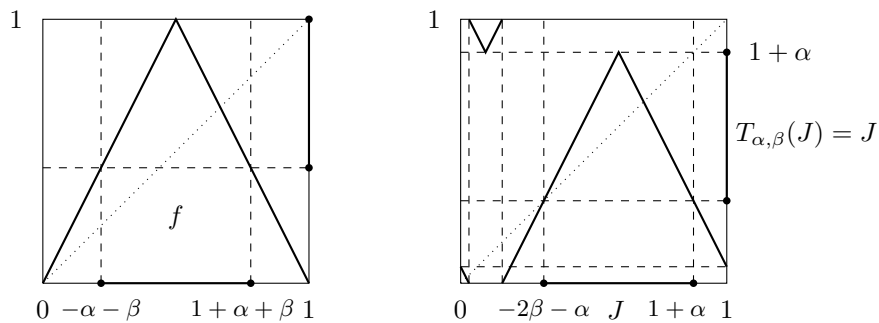

 On the other hand, the following simple example shows that we can expect maps from $C_{\lambda}(\mathbb{S}^1)$ to be leo and measure-theoretically exact wrt $\lambda$ provided they have slopes big enough even if they have the same critical values.

\begin{example}\label{ex:1}
For the map $g\in C_{\lambda}(\mathbb{S}^1)$ given on Figure~\ref{fig:ex1} the map $T_{\alpha,\beta}(g)$ is leo for every $\alpha,\beta\in [0,1)$. \\
Rotation does not change the slope, so the slope of $T_{\alpha,\beta}(g)$ stays $5$. Fix any arc $A\subset \mathbb{S}^1$, let $G$ be a lifting of $T_{\alpha,\beta}(g)$ and let $J\subset I$ be a lifting of $A$.
Note that if $J$ does not contain at least three critical points of $G$ then $\diam G(J)\geq \frac{5}{3} \diam J$. But then there is $n$
such that $G^n(J)$ contains three critical points, since distance between critical points is uniformly bounded. This means that $T_{\alpha,\beta}(g)^n(A)=\mathbb{S}^1$.\\
Applying Theorem~\ref{exactness} shows that the map $T_{\alpha,\beta}(g)$ is measure-theoretically exact with respect to $\lambda$ for every $\alpha,\beta\in [0,1)$.
\end{example}

\begin{figure}[!ht]
	\centering
	\begin{tikzpicture}[scale=3]
	\draw (1,0)--(0,0)--(0,1);
	\draw[dashed] (1,0)--(1,1)--(0,1);
	\draw[thick] (0,1)--(1/5,0)--(2/5,1);
	\draw[thick] (2/5,0)--(3/5,1);
	\draw[thick] (3/5,0)--(4/5,1)--(1,0);
	\node at (2/5,3/5) {$g$};
	\node at (0,-0.1) {$0$};
	\node at (-0.1,1) {$1$};
	\node at (1/5,-0.1) {$\frac{1}{5}$};
	\node at (2/5,-0.1) {$\frac{2}{5}$};
	\node at (3/5,-0.1) {$\frac{3}{5}$};
	\node at (4/5,-0.1) {$\frac{4}{5}$};
	\node at (1,-0.1) {$1$};
	\draw[dotted] (1/5,0)--(1/5,1);
    \draw[dotted] (2/5,0)--(2/5,1);
	\draw[dotted] (3/5,0)--(3/5,1);
    \draw[dotted] (4/5,0)--(4/5,1);
	\end{tikzpicture}
	\caption{Lifting of the map $g\pmod 1$ from Example~\ref{ex:1}.}
	\label{fig:ex1}
\end{figure}
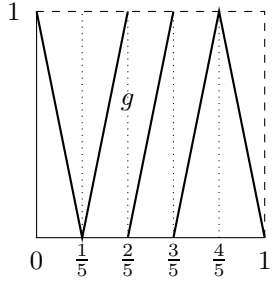

\section{Topological vs. measure-theoretic dynamical properties}

First we show that transitivity and ergodicity are equivalent under some natural assumptions  using the tools developed by Li and Yorke \cite{LY}.
We recall

\ppp*

\begin{proof} Throughout the proof, we use standard definitions from   \cite{LY} without explicitly redefining them. 

Let $F$ denote a lifting of $f$. We define $\Fh:=F|_{[0,1)}\pmod 1$ and call $\Fh$ a {\em representative} of $F$.
Since $f$ is measure-preserving and piecewise $C^2$, for each Borel set $E\subset [0,1]$ the sets $\Fh(E)$ and $\Fh^{-1}(E)$ are also Borel and
\begin{equation}\label{e:0}
\lambda(E)=0\text{ if and only if }\lambda(\Fh(E))=\lambda(\Fh^{-1}(E))=0.
\end{equation}

The fact that ergodicity implies transitivity follows since  each generic point for the measure $\lambda$ on $\Ci$ is transitive.

Let us show the converse. It follows from \cite[Theorem 1]{LY} that
\begin{eqnarray}\label{e:*}
 &\text{the map } \Fh \text{ has exactly one invariant measure absolutely continuous}\\ &\text{ with respect to } \lambda \text{ which is } \lambda \text{ itself.}\nonumber
\end{eqnarray}

 Assume that for some Borel set $A\subset [0,1]$, \begin{equation}\label{e:2}\Fh^{-1}(A)=A,~\lambda\text{-a.e.}\text{ and }\lambda(A)\in (0,1). \end{equation} Write
$$\Fh(A)=\Fh(A\setminus \Fh^{-1}(A))\cup \Fh(A\cap \Fh^{-1}(A)).
$$
But by (\ref{e:2}) and (\ref{e:0}) $\lambda(\Fh(A\setminus \Fh^{-1}(A)))=0$. Moreover $\Fh(A\cap \Fh^{-1}(A))\subset A$ with $\lambda(A\cap \Fh^{-1}(A))=\lambda(A)$ given again by (\ref{e:2}). It shows that $\Fh(A)=A$ $\lambda$-a.e., i.e., $A$ is invariant with respect to $\Fh$. So \cite[Lemma 2.2]{LY} applies: the characteristic function $\chi_A(\cdot)$ is a density of an $\Fh$-invariant measure $\mu$ given for each Borel set $E\subset [0,1]$ by the formula
\begin{equation}\label{e:3}
\mu(E)=\frac{1}{\lambda(A)}\int_{E}\chi_A(x)\di\lambda(x)=\frac{\lambda(E\cap A)}{\lambda(A)}.
\end{equation}
Clearly $\mu$ is an $\Fh$-invariant measure absolutely continuous with respect to $\lambda$ and different from $\lambda$,  contradicting \eqref{e:*}. Thus, in (\ref{e:2}) necessarily  $\lambda(A)\in \{0,1\}$, and thus   $\Fh$, resp.\  $f$ is ergodic.
\end{proof}

We recall and prove

\exact*

\begin{proof}[Proof of Theorem~\ref{exactness}]
 We can view $f$ as a piecewise continuous piecewise $C^2$ interval map. Since   $\lim_{n \to \infty} \lambda(f^n(U))=1$ for any open interval $U$ we get weak mixing of $f$ with respect to $\lambda$ by  \cite[Theorem~2(a)]{Bowen}. 
 By \cite[Theorem~1]{Bowen} the natural extension of $f$ with respect to $\lambda$ (see e.g. \cite[Section 5.10]{NP} for the definition of natural extension in our setting) is weakly Bernoulli (for the definition of weak Bernoulli see \cite{Sh} p. 89) and thus by \cite[Theorem 11.2, 11.3 and 12.1]{Sh} is Bernoulli. This, in particular, implies that $f$ with respect to $\lambda$ is measure-theoretically exact (see \cite{Quas}).
\end{proof}

If $f$ is weakly mixing with respect to a fully supported measure, then it is topologically weak mixing and by Banks and Trotta \cite{BaTr} (cf. Theorem 4.4. from \cite{KKO}) it follows that on topological graphs it is topologically mixing. By Theorem 7.3. \cite{KKO2} a topologically mixing circle map that is not leo needs to have infinitely many fixed points.
 Strictly speaking we have the following.
\begin{remark}\label{rem:leo}
Let $f\in C_{\lambda}(\Ci)$ be weakly mixing w.r.t. $\lambda$. Then $f$ is topologically mixing. If $f$ is additionally piecewise monotone, it is leo.
\end{remark}

\section{Leo is open}

\subsection{Periodic arcs}
We start this subsection with a basic, however very useful lemma.
\begin{lemma}\label{periodic}
Let $f\in C_{\lambda}(\mathbb{S}^1)$ be a piecewise monotone map with slope $>2$ and $k$ critical points. Suppose $f$ has a periodic arc $A=[a_0,b_0]\subset \mathbb{S}^1$ of  {\it minimal} period $n\in\N$.
\begin{itemize}
\item[(1)]For each $i,j\in \Z$, $\lambda(f^{i}(A))=\lambda(f^{j}(A))$.
\item[(2)]\label{2:iii}For $[a_i,b_i]:=f^{i +k n}(A)$ for any $k \ge 0$,
\begin{itemize}
    \item[(i)]  $f(c_i)=a_{i+1}$ and $f(d_i)=b_{i+1}$ for some critical points $c_i,d_i\in  (a_i,b_i)$,
    \item[(ii)]  the points $a_i,b_i$ are not critical and
    \item[(iii)] $f(\{a_i,b_i\})\subset\{a_{i+1},b_{i+1}\}$.
    \end{itemize}
    \item[(3)]\label{periodic:3} $n\le k$.
\end{itemize}
\end{lemma}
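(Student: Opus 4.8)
\textbf{Proof plan for Lemma~\ref{periodic}.}

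The plan is to exploit the fact that slope $>2$ forces any arc that is not "too big" to grow under $f$, while a periodic arc cannot grow in measure. I would begin with part (1): since $f$ preserves $\lambda$, for any arc $B$ with $\lambda(\partial f^{-1}(B))=0$ one has $\lambda(f^{-1}(B))=\lambda(B)$, and combining this with surjectivity-type arguments (or directly: $\lambda(f(C)) \le \lambda(f|_{\text{pieces}}(C))$ accounting) one shows that along the periodic orbit $A, f(A),\dots,f^{n-1}(A)$ the measure can never strictly increase, hence is constant; a clean way is to note $A = f^n(A) = f^{n-1}(f(A)) = \dots$ and that $\lambda(f(f^i(A))) \ge \lambda(f^i(A))$ is impossible to be strict at any step if the cycle closes up, because measure is preserved globally and the $f^i(A)$ eventually cover themselves periodically. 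So $\lambda(f^i(A))=\lambda(A)$ for $0\le i<n$, and then $\lambda(f^i(A))=\lambda(f^j(A))$ for all $i,j\in\Z$ follows by periodicity.

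Next, for part (2), fix $[a_i,b_i]:=f^{i+kn}(A)$. The key dichotomy: if $f|_{[a_i,b_i]}$ were monotone (no critical point in the interior), then because the slope is $>2$ we would get $\lambda(f([a_i,b_i])) > 2\lambda([a_i,b_i])$, forcing $\lambda(f^{i+1}(A)) > 2\lambda(f^i(A))$, contradicting part (1). Hence $[a_i,b_i]$ contains at least one critical point in its interior; I would then argue that the image $f([a_i,b_i])$ is an arc whose two endpoints are the images of $a_i$ and $b_i$ OR are critical values — and since $f^{i+1}(A)=[a_{i+1},b_{i+1}]$ is an arc equal to this image, its endpoints $a_{i+1},b_{i+1}$ must each be either $f(a_i)$, $f(b_i)$, or $f$ of a critical point lying in $(a_i,b_i)$. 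A counting/extremality argument (the image arc's endpoints are the min and max of $f$ on $[a_i,b_i]$, and these extrema are attained either at the endpoints or at interior critical points) gives (i) the existence of critical points $c_i,d_i\in(a_i,b_i)$ with $f(c_i)=a_{i+1}$, $f(d_i)=b_{i+1}$ — after possibly swapping which endpoint is which. For (ii), if say $a_i$ were critical, then $f(a_i)$ would be a local extremum, so $f(a_i)\in\{a_{i+1},b_{i+1}\}$ and moreover a neighborhood of $a_i$ inside $[a_i,b_i]$ maps into $[a_{i+1},b_{i+1}]$ in a folded way; then working one more step, or using injectivity of the cyclic structure together with (i), yields a contradiction with minimality of the period or with part (1)'s measure rigidity (the "waste" of folding at an endpoint would make the image arc too short). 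Item (iii) is then essentially immediate from (i)–(ii): $a_i,b_i$ are non-critical endpoints, so $f(a_i),f(b_i)$ are endpoints of $f([a_i,b_i]) = [a_{i+1},b_{i+1}]$, i.e.\ lie in $\{a_{i+1},b_{i+1}\}$.

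Finally, part (3): by (2)(i), for each $i\in\{0,\dots,n-1\}$ there is a critical point $c_i\in(a_i,b_i)$ with $f(c_i)=a_{i+1}$ (indices mod $n$). I would show the arcs $(a_i,b_i)$, $0\le i<n$, have pairwise disjoint interiors — this is where I expect the main work to lie. If two of them, say $(a_i,b_i)$ and $(a_j,b_j)$ with $i<j$, overlapped, one uses that $f^{j-i}$ maps $[a_i,b_i]$ onto $[a_j,b_j]$ and the measure-constancy from part (1) to derive that actually $f^{j-i}([a_i,b_i])$ cannot be a proper rearrangement — more carefully, overlapping periodic arcs of the same measure in a cycle would force $f^{j-i}(A)=f^0(A)$ by an expansion argument, contradicting that $n$ is the \emph{minimal} period unless $i=j$. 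Once disjointness of the interiors is established, each interior contains its own critical point $c_i$, and these $n$ critical points are distinct (they lie in disjoint open arcs), so $n\le k$.

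\textbf{Main obstacle.} The delicate point is controlling the endpoints precisely in part (2): ruling out that an endpoint $a_i$ is critical, and pinning down that the new endpoints come from images of \emph{interior} critical points rather than being lost to folding. The cleanest tool throughout is the rigidity from part (1) — any folding at or near an endpoint forces the image arc to have measure strictly less than expected from the slope bound, contradicting measure-constancy along the cycle — so I would lean on that quantitative estimate repeatedly rather than on purely combinatorial case analysis. Establishing the pairwise disjointness of the interiors $(a_i,b_i)$ for part (3) is the other place where care with the minimal-period hypothesis is essential.
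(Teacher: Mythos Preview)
Your approach to parts (1) and (2) tracks the paper closely. The paper dispatches (1) with the single chain $\lambda(A)\le\lambda(f(A))\le\cdots\le\lambda(f^n(A))=\lambda(A)$, using $A\subset f^{-1}(f(A))$ together with measure preservation, and then simply asserts that (2)(i)--(iii) follow from (1). Your expanded arguments for (2) are fine in spirit, though the step you give for (iii) is not correct as written: a non-critical endpoint need not map to an endpoint of the image arc. The clean route is to observe from (1) that $[a_i,b_i]$ is in fact the \emph{full} preimage $f^{-1}([a_{i+1},b_{i+1}])$ (they have equal measure and one contains the other); continuity then forces $f(\{a_i,b_i\})\subset\{a_{i+1},b_{i+1}\}$, and (ii) follows because a critical endpoint would fold a one-sided exterior neighbourhood of $a_i$ into $[a_{i+1},b_{i+1}]$, contradicting the full-preimage property.

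The genuine gap is in part (3). The pairwise disjointness of the interiors $(a_i,b_i)$ that you plan to establish is \emph{false} in general, so that route cannot succeed. For a concrete counterexample, take the degree $-1$ map $f\in\mathrm{PA}_\lambda(\mathbb{S}^1)$ of constant slope $3$ that sends each of $[0,\tfrac13]$, $[\tfrac13,\tfrac23]$, $[\tfrac23,1]$ onto $[\tfrac23,1]$, $[\tfrac13,\tfrac23]$, $[0,\tfrac13]$ respectively via a three-lap zigzag. Then $A_0=[0,\tfrac23]$ is a periodic arc of minimal period $2$ with $A_1=f(A_0)=[\tfrac13,1]$, and $A_0\cap A_1=[\tfrac13,\tfrac23]$ has nonempty interior. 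Your ``expansion argument'' cannot convert overlap into $f^{j-i}(A)=A$: here $f(A_0)=A_1\neq A_0$ despite the overlap.

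The paper's argument for (3) avoids disjointness entirely and is much shorter. From (2)(i) one has critical points $d_i\in(a_i,b_i)$ with $f(d_i)=b_{i+1}$. Since the arcs $f^i(A)$ for $0\le i<n$ are pairwise distinct (minimality of the period) and all have the same length by (1), their endpoints $b_i$ are pairwise distinct; hence the values $f(d_i)=b_{i+1}$ are pairwise distinct, so the $d_i$ are $n$ distinct critical points and $n\le k$. This is exactly the observation you need in place of disjointness.
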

\begin{proof}
For any $A \in \A$ we have $A \subset f^{-1}(f(A))$, thus 
since the map $f$ preserves the Lebesgue measure, we have
\begin{equation*}
\lambda(A)\le \lambda(f(A))\le  \cdots \le \lambda(f^{n-1}(A))\le \lambda(f^{n}(A))=\lambda(A),
\end{equation*}
hence (1) follows.
Property (1) easily implies all three properties (2)(i)-(iii). Our assumption (\ref{e:1a}) from the definition of periodic arcs together with the properties (1) and (2)(i) mean that critical points $d_i\in (a_i,b_i)$ satisfying $f(d_i)=b_{i+1}$ are different for different $i$'s, because $b_i\ne b_j$ for $0\leq i< j<n$. 
Therefore, the period $n$ satisfies $n\le k$ as stated in (3).
\end{proof}

The following theorem shows that the phenomenon from Example~\ref{ex:tent} cannot be realized if the slope is strictly greater than $2$.

Let 
$$\begin{array}{lll}
E(f)&:= & \big \{(\alpha,\beta): T_{\alpha,\beta} \text{ has a periodic arc} \big \},\\

E_1(f)&:=& \big \{\alpha: T_{\alpha,0} \text{ has a periodic arc} \big \},\\

E_2(f)&:= & \big \{\beta: T_{0,\beta} \text{ has a periodic arc} \big \}.
\end{array}$$

In practice, the following theorem will be used for piecewise linear maps. For the sake of completeness we state it in a more general form.
\begin{thm}\label{periodn}
Let $f\in C_{\lambda}(\mathbb{S}^1)$ be piecewise monotone and piecewise $C^2$ map with slope strictly greater than $2$. Then
\begin{enumerate}
    \item\label{periodn1}  $E_1(f) = E_2(f)$ are finite sets.
    \item\label{periodn2} $E(f)=  \{(\alpha,0) + (\gamma,-\gamma) \pmod{1}: \alpha \in E_1(f) \text{ and } \gamma \in [0,1)\}$,\\ i.e., $E(f)$ is a finite union of pairwise disjoint topological circles.
    \item\label{periodn3} The following three conditions are equivalent.
    \begin{enumerate}
        \item[(3a)] $T_{\alpha,\beta}$ is leo.
        \item[(3b)] $(\alpha,\beta)\notin E(f)$. 
        \item[(3c)] $T_{\alpha,\beta}$  is exact with respect to $\lambda$.
\end{enumerate}
\end{enumerate}
\end{thm}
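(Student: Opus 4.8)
The plan is to reduce the whole statement to the one-parameter family $g_\gamma:=r_\gamma\circ f=T_{\gamma,0}(f)$, $\gamma\in[0,1)$ (indices read mod $1$), by means of two conjugacies. Since $r_\beta$ conjugates $T_{\alpha,\beta}(f)=r_\alpha\circ f\circ r_\beta$ to $r_\beta\circ(r_\alpha\circ f\circ r_\beta)\circ r_{-\beta}=r_{\alpha+\beta}\circ f=g_{\alpha+\beta}$, and in particular $T_{0,\beta}(f)$ to $g_\beta=T_{\beta,0}(f)$, and since ``having a periodic arc'' and ``being leo'' are invariants of topological conjugacy, we at once get $E_1(f)=E_2(f)=\{\gamma:g_\gamma\text{ has a periodic arc}\}=:\mathcal E$ and $(\alpha,\beta)\in E(f)\iff\alpha+\beta\bmod1\in\mathcal E$. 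Writing the latter set as $\{(\alpha_0,0)+(\gamma,-\gamma)\bmod1:\alpha_0\in\mathcal E,\ \gamma\in[0,1)\}$ is exactly item \eqref{periodn2}, and for distinct $\alpha_0\in\mathcal E$ the corresponding $(1,-1)$-curves are pairwise disjoint topological circles in $[0,1)^2$. So both \eqref{periodn2} and the equality $E_1(f)=E_2(f)$ of \eqref{periodn1} will follow once $\mathcal E$ is shown to be finite.

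For finiteness I would use Lemma~\ref{periodic}. Suppose $g_\gamma$ has a periodic arc $A=[a_0,b_0]$ of minimal period $n$; then $n\le k$, where $k=\#\crit(g_\gamma)=\#\crit(f)$, because rotations do not move critical points. Write $[a_i,b_i]=g_\gamma^i(A)$. By Lemma~\ref{periodic}(2)(i) each endpoint $a_i,b_i$ is the $g_\gamma$-image of a critical point of $g_\gamma$, hence of the form $v+\gamma\bmod1$ for a critical value $v$ of $f$; consequently $\lambda(A)$ is determined, independently of $\gamma$, by a pair of critical values of $f$ and so takes one of finitely many values $\ell$, and $b_1=a_1+\ell$. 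Lemma~\ref{periodic}(2)(iii) then forces $g_\gamma(a_0)\in\{a_1,a_1+\ell\}$; writing $a_0=v+\gamma$, $a_1=v'+\gamma$, this reads $f(v+\gamma)\in\{v',v'+\ell\}$, i.e.\ $v+\gamma$ lies in the finite set $f^{-1}(\{v',v'+\ell\})$. Letting $v,v'$ run over the finitely many critical values of $f$ and $\ell$ over its finitely many possible values confines $\gamma$ to a finite set; hence $\mathcal E$ is finite, completing \eqref{periodn1} and \eqref{periodn2}.

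For \eqref{periodn3}, fix $(\alpha,\beta)$ and put $g:=T_{\alpha,\beta}(f)$, a piecewise monotone, piecewise $C^2$ map with slope $>2$. The implication (3a)$\Rightarrow$(3b) is immediate: for a periodic arc $A$ of period $n$ one has $g^j(\Int A)\subseteq g^{\,j\bmod n}(A)\subsetneq\Ci$ for all $j\ge0$, so $g$ is not leo. For (3a)$\Leftrightarrow$(3c) I would invoke the earlier results: if $g$ is leo it is topologically mixing, hence measure-theoretically exact with respect to $\lambda$ by Theorem~\ref{exactness}; conversely exactness implies weak mixing, whence $g$ is leo by Remark~\ref{rem:leo} (since $g$ is piecewise monotone). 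It therefore remains to prove (3b)$\Rightarrow$(3a), and together with the above this closes the cycle $(3b)\Rightarrow(3a)\Rightarrow(3c)\Rightarrow(3a)$, $(3a)\Rightarrow(3b)$.

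The hard part — and the main obstacle — is (3b)$\Rightarrow$(3a): if $g$ has no periodic arc then $g$ is leo. I would argue by contradiction. Assume $g$ is not leo and let $\mathcal F$ be the family of closed proper subarcs $B$ of $\Ci$ with $g^n(B)\ne\Ci$ for all $n\ge0$; this is nonempty (a small closed subarc of a witnessing open set lies in it), so $s:=\sup_{B\in\mathcal F}\lambda(B)>0$. A compactness argument on the space of closed arcs should yield $A^\ast\in\mathcal F$ with $\lambda(A^\ast)=s$. Since $g$ has slope $>2$ and each $g^i(A^\ast)$ again lies in $\mathcal F$, with at most $k$ critical points in it and hence at most $k+1$ monotone pieces, one has the expansion estimate $\lambda(g^{i+1}(A^\ast))>\tfrac{2}{k+1}\lambda(g^i(A^\ast))$; this, together with the maximality of $s$, should show that the forward orbit $\{g^i(A^\ast):i\ge0\}$ consists of only finitely many distinct arcs (short arcs are pushed up by expansion, and there are only $k$ critical points available to obstruct growth). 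Then by pigeonhole $g^i(A^\ast)=g^j(A^\ast)$ for some $i<j$, so $g^i(A^\ast)$ is a periodic arc of $g$ — contradicting (3b), and forcing $g$ to be leo. The delicate points I expect to fight with are: attaining the supremum $s$; verifying $s<1$; and, above all, ensuring that Hausdorff limits of members of $\mathcal F$ still lie in $\mathcal F$ (proper closed arcs can accumulate onto $\Ci$). This is exactly where the fine geometry of expanding piecewise monotone maps — the machinery also underlying Corollary~\ref{cor:leo} — must be deployed; everything else is bookkeeping on top of Lemma~\ref{periodic}, Theorem~\ref{exactness} and Remark~\ref{rem:leo}.
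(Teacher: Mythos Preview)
Your treatment of items \eqref{periodn1} and \eqref{periodn2} via the conjugacy $r_\beta\circ T_{\alpha,\beta}(f)\circ r_{-\beta}=T_{\alpha+\beta,0}(f)$, together with the constraint from Lemma~\ref{periodic} that the endpoints of a periodic arc are critical values, is correct and essentially the paper's own argument (the paper works with the conjugate family $T_{0,\beta}$, whose critical values are $\beta$-independent, which makes the bookkeeping marginally cleaner, but this is cosmetic). Your handling of (3a)$\Rightarrow$(3b) and (3a)$\Leftrightarrow$(3c) also matches the paper.

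The genuine gap is in (3b)$\Rightarrow$(3a), exactly where you flag it. The compactness argument for $\mathcal F$ does not close: even if a Hausdorff limit $A^\ast$ of arcs $A_n\in\mathcal F$ is a proper arc, nothing rules out $g^m(A^\ast)=\mathbb S^1$ for some $m$ (the lifted lengths $\lambda(G^m(I_n))$ may tend to $1$ from below), so $A^\ast\notin\mathcal F$ is possible; nor is $s<1$ evident from slope $>2$ alone. Your expansion bound $\tfrac{2}{k+1}$ is $<1$ once $k\ge2$ and hence yields no growth, and invoking the machinery behind Corollary~\ref{cor:leo} is illegitimate here since that result needs slope $>4$ and distinct critical values. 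The paper bypasses compactness entirely. It first shows, via the structure theory of transitive graph maps, that a piecewise monotone $g$ with no periodic arc which is not leo cannot even be \emph{transitive}: transitivity would force total transitivity (periodic decompositions being excluded, \cite{Alseda,Banks}), hence topological mixing (by \cite{Alseda2}, since slope $>2$ precludes conjugacy to an irrational rotation and gives periodic points), and a mixing non-leo graph map has infinitely many fixed points for some iterate \cite{KKO2}, impossible for a piecewise monotone map. With open $U,V$ witnessing non-transitivity, the paper then takes $W:=\overline{\bigcup_{n\ge0}g^n(U)}$; this forward-invariant closed set misses $V$, has finitely many connected components (each of measure $\ge\lambda(U)$), and pigeonhole plus $\lambda$-preservation forces one component to be a periodic arc. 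Working with the forward-invariant \emph{union} of the orbit, rather than chasing a single maximal arc, is the missing idea.
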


\begin{proof}

\eqref{periodn1}   
To see that sets $E_1(f)$ and $E_2(f)$ are of equal cardinality notice that $T_{\alpha,\beta}$ is topologically conjugate through $r_{\gamma}$ to $T_{\alpha+\gamma,\beta-\gamma}$ for any $\gamma$.
In particular for $\alpha=\theta$,
$\beta = 0$ and $\gamma = -\theta$ this yields
 $\theta \in E_2(f)$ provided $\theta\in E_1(f)$. For the converse implication take $\alpha=0$, $\beta=\theta$ and $\gamma=\theta$.

Suppose $\beta \in E_2(f)$.
Since $r_\beta$ is an inner rotation, one can see that the set of critical values of $T_{0,\beta}$ does not depend on a concrete value of $\beta\in [0,1)$, denote this common set by $CV(f)$. Note that by our assumption on the number of critical points of $f$, $\vert CV(f)\vert\le k$. It follows from Lemma \ref{periodic}(2) that  whenever $\beta \in E_2(f)$ has a periodic arc $A_0=[a_0,b_0]\subset \mathbb{S}^1$ of a {\it minimal} period $n\in\N$ then $\bigcup_{i=0}^{n-1}\{a_i,b_i\}\subset CV(f)$ 
and there are maximally $\binom{k}{2}$ of such periodic orbits created by arcs with endpoints in $CV(f)$.

By Lemma \ref{periodic}(2) we additionally have $T_{0,\beta}(\bigcup_{i=0}^{n-1}\{a_i,b_i\}) 
= \bigcup_{i=0}^{n-1}\{a_i,b_i\} \subset CV(f)$.
But for any $x\in \mathbb{S}^1$ there are only finitely many $\beta$ for which $T_{0,\beta}(x)=f(x+\beta)\in CV(f)$, in particular
there are only finitely many $\beta$ for which $a_0$ and $a_0 + \beta$ are in $CV(f)$.
We have shown the desired result,
there can be only finitely many values $\beta\in [0,1)$ such that $T_{0,\beta}$ has a periodic arc.

\eqref{periodn2}  This follows from \eqref{periodn1} and the conjugacy described in the first part of this proof.

 \eqref{periodn3} 
The implication (3a)$\implies$(3b) follows from the definition of leo property. Namely, if $T_{\alpha,\beta}$ is leo then it cannot have a periodic arc.  
(3c)$\implies$(3a) is a consequence of  Remark~\ref{rem:leo}. (3a)$\implies$(3c) follows from Theorem~\ref{exactness}. It remains to prove that (3b)$\implies$(3a).

Thus we only need to show that if $T_{\alpha,\beta}$ is not leo then it has a periodic arc.
 Assume that (A1) $f$ is not leo and does not have a periodic arc.
First we will show that $f$ cannot be transitive; assume the contrary (A2) that $f$ is transitive.
Transitive graph map is either totally transitive or admits periodic decomposition (see Theorem 2.2 in \cite{Alseda}, cf. \cite{Banks}), which in the case of circle means partition into arcs with disjoint interiors, permuted by the map. The latter case is excluded by our assumption, hence the map is totally transitive. But slope $>2$ implies that the map $f$ is not conjugate with irrational rotation which implies it has periodic points (see \cite[Theorem~C]{Alseda2}).
Then $f$ is topologically mixing (see \cite[Theorem~A]{Alseda2}, cf. \cite{KKO,KKO2}). But topologically mixing map which is not leo must have infinitely many fixed points for $f^m$ for some $m$ (see Theorem 7.3 in \cite{KKO2}), which is not the case for piecewise monotone maps. This  is a contradiction of (A2) so $f$ is not transitive. 

Remember that this non-transitive $f$  does not have a periodic arc.
Let $U,V$ be nonempty open subsets of $\mathbb S^1$ such that for each non-negative integer $n$, $f^n(U)\cap V=\emptyset$.  Put
\begin{equation*}
    W:=\overline{\bigcup_{n\ge 0} f^n(U)}
    \end{equation*}
 and suppose $\overline W$ is a connected component of $W.$  By our definition, there exists $j$ fulfilling $f^j(U)\subset \overline W$. Since the map $f$ preserves the Lebesgue measure, $\lambda(U)\le \lambda(f^i(U))$ hence
\begin{equation*}
    \lambda(U)\le \lambda(f^j(U))\le \lambda(\overline W).
\end{equation*}
Thus there are finitely many connected components of $W$, denote them $W_i, 1 \le i \le k$. Using the definition of $W$ and the
pigeonhole principle one can show that
there is $i$ and a number $1\leq s\leq k$ such that $$f^s(W_i)\subset W_i.$$
But $\lambda (W_i)\leq \lambda (f^s(W_i))\leq \lambda(W_i)$, so in fact $W_i=f^s(W_i)$, that is $W_i$ is a periodic arc,  contradicting assumption (A1).
\end{proof}

\subsection{Stably leo piecewise affine maps}
\begin{lem}\label{lem:turning}
Let $f$ be a piecewise affine Lebesgue measure-preserving circle map and $c$ a turning point of $f$. Then at least one of the following two properties holds:
\begin{enumerate}
    \item\label{lem:turning(1)} there exists another turning point $c\neq c'\in \Ci$ such that $f(c)=f(c')$ and if $c$ is a local minimum then $c'$ is a local maximum or vice-versa (see the left-most part of Figure~\ref{fig:7}),
    \item\label{lem:turning(2)} there exists a point $x\in \Ci$ so that $f(x)=f(c)$ and an open neighbourhood $U\ni x$ such that $f|_U$ is injective (see left part of Figure~\ref{fig:8}).
\end{enumerate}
\end{lem}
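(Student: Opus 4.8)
The plan is to analyze the local behavior of the piecewise affine map $f$ near a turning point $c$ and track where the critical value $v:=f(c)$ is attained elsewhere on $\Ci$. Assume without loss of generality that $c$ is a local minimum, so there is a small $\delta>0$ such that $f$ is affine and decreasing on $(c-\delta,c)$ and affine and increasing on $(c,c+\delta)$; in particular $f(x)>v$ for $x\in (c-\delta,c+\delta)\setminus\{c\}$ and $f$ maps a small neighbourhood of $c$ onto $[v,v+\eta)$ for some $\eta>0$. The key measure-theoretic input is that $f$ preserves $\lambda$: since $f$ is piecewise affine with finitely many laps, the number of preimages of a point is constant ($=\degr(f)$ counted with the appropriate convention, or more precisely, for a.e. point the fibre has the full expected size), and because $c$ is a minimum, near $c$ two of these preimage branches are "pinched together" at $v$. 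I would make this precise by picking a value $v'=v+\zeta$ slightly above $v$ (with $\zeta$ small enough to avoid the finitely many other critical values) and counting $|f^{-1}(v')|$; two of these preimages lie in $(c-\delta,c+\delta)$, one on each side of $c$.

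Next I would compare this with $|f^{-1}(v)|$. As $v'\downarrow v$, the two preimages near $c$ merge into the single point $c$, so the fibre over $v$ has (generically) one fewer point contributed from this neighbourhood than the fibre over $v'$. Since $\lambda$ is preserved and $f$ is piecewise affine, the total "mass" of the fibre — i.e., $\sum_{x\in f^{-1}(y)} 1/|f'(x)|$ — is constant in $y$ (equal to $1$ after normalization) on each interval between consecutive critical values. Passing this identity across the critical value $v$ forces a compensating change in the fibre structure at $v$: either some other branch must also pinch at $v$ (two preimages merging at another turning point $c'$, which must be a local maximum since the branch is coming from above $v$ — this is case~\eqref{lem:turning(1)}), or one of the preexisting preimages $x$ of $v$ persists as an honest transverse (non-critical) preimage, i.e. $f$ is a local homeomorphism near some $x$ with $f(x)=v$ — this is case~\eqref{lem:turning(2)}. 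So the dichotomy is exactly: either the "lost" branch at $v$ is recovered by another turning point with opposite extremum type, or it was never the only structure and a regular preimage of $v$ exists.

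To organize this cleanly I would argue by contradiction against both alternatives: suppose \eqref{lem:turning(2)} fails, so every $x$ with $f(x)=v$ is a turning point of $f$ (a point where $f$ is not locally injective); then I must produce a turning point $c'\neq c$ with $f(c')=v$ of the opposite extremum type, giving \eqref{lem:turning(1)}. Using the constancy of $\sum_{x\in f^{-1}(y)}1/|f'(x)|$ on the two adjacent intervals $(v-\text{small},v)$ and $(v,v+\text{small})$ and letting $y\to v$ from each side, the one-sided limits of this sum must agree with the value at $v$ being "filled in" by the turning points; a turning point that is a minimum only receives branches from $y>v$ while one that is a maximum only from $y<v$. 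Matching the two one-sided counts forces the existence of at least one turning point in $f^{-1}(v)$ contributing from the side opposite to $c$, which is a maximum — this is the desired $c'$. The main obstacle I anticipate is handling the bookkeeping of fibre sizes and the $1/|f'|$ weights rigorously across a critical value while correctly accounting for endpoints of affine pieces that are not turning points (the paper explicitly warns these exist); being careful that "not locally injective at $x$" genuinely means $x$ is a turning point and not merely a non-differentiability point of a monotone branch is where the piecewise affine, Lebesgue-preserving hypotheses must be used in tandem.
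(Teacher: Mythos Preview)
Your argument is correct, but it takes a route different from the paper's. You argue the contrapositive direction \emph{not \eqref{lem:turning(2)} $\Rightarrow$ \eqref{lem:turning(1)}} via the Jacobian identity $\sum_{x\in f^{-1}(y)}1/|f'(x)|=1$ (which is exactly Lemma~\ref{l:6} in the paper): comparing the identity for $y\uparrow v$ and $y\downarrow v$, the branches approaching from below $v$ must terminate at local maxima in $f^{-1}(v)$, and since the sum on that side is $1>0$ there must be at least one such maximum $c'\neq c$. The paper instead assumes \eqref{lem:turning(1)} fails and derives \eqref{lem:turning(2)} by a direct preimage argument: for a small interval $U=(v-\delta,v+\delta)$ containing no other critical value, each connected component of $f^{-1}(U)$ maps onto $U$, $U^-=(v-\delta,v]$, or $U^+=[v,v+\delta)$; failure of \eqref{lem:turning(1)} forbids components hitting only $U^+$ (when $c$ is a maximum), and then $\lambda(f^{-1}(U^+))=\delta>0$ forces at least one component to map onto all of $U$, giving the monotone branch through $v$.

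Both proofs encode the same measure-balancing principle; the paper's version is shorter and avoids tracking slope weights, while yours leverages a tool (Lemma~\ref{l:6}) already available in the paper and makes the ``two branches from above must be balanced by two from below'' heuristic quantitatively explicit. One small cleanup for your write-up: drop the remark that the preimage count equals $\deg(f)$ --- that is not what happens for noninvertible circle maps --- and simply invoke Lemma~\ref{l:6} directly to get the constancy of $\sum 1/|f'|$ away from critical values.
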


\begin{figure}[!ht]
	\centering
	\begin{tikzpicture}[scale=2.8]
	\draw (-0.3,0.5)--(1,0.5);
	\draw (0.15,0)--(0.3,0.5)--(0.45,0);
	\draw (-0.1-0.1,0)--(-0.05-0.1,0.5)--(0.1-0.1,0);
	\draw (0.6,0.9)--(0.9,0.1);
	\draw[->] (1.1,0.5)--(1.2,0.5);
	\end{tikzpicture}
	\begin{tikzpicture}[scale=2.8]
	\draw (-0.3,0.5)--(1,0.5);
	\draw (-0.1-0.1,0)--(-0.05-0.1,0.5)--(0.1-0.1,0);
	\draw[dashed] (-0.3,0.9)--(1,0.9);
	\draw[dashed] (-0.3,0.1)--(1,0.1);
	\draw[red, thick] (0.6,0.9)--(0.65,0.5)--(0.7,0.9)--(0.75,0.5);
	\draw[thick] (0.15,0)--(0.3,0.5)--(0.45,0);
	\draw (0.6,0.9)--(0.9,0.1);
	\draw[dashed] (0.6,0.9)--(0.6,0.1);
	\draw[dashed] (0.75,0.9)--(0.75,0.5);
	\draw[thick] (0.75,0.5)--(0.9,0.1);
	\end{tikzpicture}
	\caption{Reduction to Figure~\ref{fig:7}.}\label{fig:8}
	\vspace{0.8cm}
	\begin{tikzpicture}[scale=2.8]
	\draw (-0.1,0.5)--(1,0.5);
	\draw (0.1,1)--(0.3,0.5)--(0.5,1);
	\draw (0.65,0)--(0.8,0.5)--(0.95,0);
	\draw[->] (1.1,0.5)--(1.2,0.5);
	\end{tikzpicture}
	\begin{tikzpicture}[scale=2.8]
	\draw (-0.1,0.5)--(1,0.5);
	\draw[dashed] (-0.1,0.9)--(1,0.9);
	\draw[dashed] (-0.1,0.1)--(1,0.1);
	\draw (0.075,1)--(0.3,0.5)--(0.5,1);
	\draw (0.65,0)--(0.8,0.5)--(0.95,0);
	\draw[->] (1.1,0.5)--(1.2,0.5);
	\draw[dashed] (0.12,0.9)--(0.12,0.5);
	\draw[dashed] (0.3,0.9)--(0.3,0.5);
	\draw[dotted,red,thick] (0.12,0.9)--(0.21,0.5)--(0.26,0.9)--(0.3,0.5);
	\draw[thick] (0.21,0.5)--(0.3,0.5);
	\node at (0.25, 0.4) {\small $\eps$};
	\draw[dashed] (0.68,0.1)--(0.68,0.5);
	\draw[dashed] (0.8,0.1)--(0.8,0.5);
	\draw[thick] (0.71,0.5)--(0.8,0.5);
	\draw[red, thick, dotted] (0.68,0.1)--(0.71,0.5)--(0.75,0.1)--(0.8,0.5);
	\node at (0.75, 0.6) {\small $\eps$};
	\end{tikzpicture}
	\begin{tikzpicture}[scale=2.8]
	\draw (-0.1,0.5)--(1,0.5);
	\draw[dashed] (-0.1,0.9)--(1,0.9);
	\draw[dashed] (-0.1,0.1)--(1,0.1);
	\draw (0.075,1)--(0.3,0.5)--(0.5,1);
	\draw (0.65,0)--(0.8,0.5)--(0.95,0);
	\draw[dashed] (0.12,0.9)--(0.12,0.5);
	\draw[dashed] (0.3,0.9)--(0.3,0.5);
	\draw[dotted,red,thick] (0.12,0.9)--(0.21,0.5)--(0.26,0.9)--(0.3,0.5);
	\draw[thick] (0.21,0.5)--(0.3,0.5);
	\node at (0.25, 0.4) {\small $\eps$};
	\draw[dashed] (0.68,0.1)--(0.68,0.5);
	\draw[dashed] (0.8,0.1)--(0.8,0.5);
	\draw[thick] (0.71,0.5)--(0.8,0.5);
	\draw[red, thick, dotted] (0.68,0.1)--(0.71,0.5)--(0.75,0.1)--(0.8,0.5);
	\node at (0.75, 0.6) {\small $\eps$};
	\draw[red, thick] (0.12,0.9)--(0.21,0.5)--(0.26,0.1)--(0.3,0.5);
	\draw[red, thick] (0.68,0.1)--(0.71,0.5)--(0.75,0.9)--(0.8,0.5);
	\draw[thick] (0.8,0.5)--(0.95,0);
	\draw[thick] (0.3,0.5)--(0.5,1);
	\draw[thick] (0.075,1)--(0.12,0.9);
	\draw[thick] (0.68,0.1)--(0.65,0);
	\end{tikzpicture}
	\caption{Perturbation in case when critical values are different type of extrema.}\label{fig:7}
\end{figure}

\begin{proof}
Assume that we are not in case \ref{lem:turning(1)} and suppose that $f(c)$ is a local maximum. 
Consider a small neighbourhood $U:=(f(c)-\delta,f(c)+\delta)$ of $f(c)$ which contains no other critical value of $f$. Let $U^{-}:=(f(c)-\delta,c]$ and $U^{+}:=[c,f(c)+\delta)$. 
The set $f^{-1}(U)$ consists of a finite number of intervals. Since there is only one critical value in $U$, each such interval needs to be mapped onto either $U$, $U^{-}$ or $U^{+}$.
By the assumption at least one of them gets mapped onto $U^{-}$ and none of them gets mapped only onto $U^{+}$. 
Because $f$ is a Lebesgue measure-preserving circle map at least one of these intervals needs to be mapped onto $U$. The case when $f(c)$ is a local minimum is analogous.
\end{proof}

\begin{lem}\label{lem:13}
Assume that $f\in \mathrm{PA}_{\lambda}(\Ci)$. Arbitrarily close to $f$ we can find a map $h\in \mathrm{PA}_{\lambda}(\Ci)$ all of whose critical values are distinct. Furthermore, if absolute values of slopes of $f$ are bounded below by a constant $s$, then $h$ can be chosen to satisfy this as well. 
\end{lem}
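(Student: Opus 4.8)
The plan is to perturb $f$ one critical value at a time, using small regular window perturbations to split coincidences among critical values while controlling the slope. First I would enumerate the critical points $c_1,\dots,c_k$ of $f$ and consider the multiset of critical values $\{f(c_1),\dots,f(c_k)\}$. The idea is to process them in order: having arranged that $f(c_1),\dots,f(c_{j-1})$ are all distinct and distinct from the remaining values, I want to modify $f$ near $c_j$ (or rather on a tiny arc whose image is a neighbourhood of $f(c_j)$) so that the new critical value near $c_j$ becomes a generic real number, avoiding the finite set of already-fixed critical values and all other critical values of $f$. Because $\mathrm{PA}_\lambda(\Ci)$ is built from affine pieces, a localised perturbation that changes the height of a single ``tooth'' by an arbitrarily small amount while preserving measure is exactly what a window perturbation provides.

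The key technical point is how a window perturbation near a turning point changes a critical value. Fix a turning point $c$ of $f$, say a local maximum with value $v=f(c)$. Choose a very small $\delta>0$ so that $U=(v-\delta,v)$ contains no other critical value, and let $A$ be the component of $f^{-1}(U)$ containing $c$; then $f|_A$ is a two-branched tent-like map onto $U$. Replacing $f|_A$ by a regular $m$-fold window perturbation (with $m$ odd, $m\geq 3$) produces a new map $h\in\mathrm{PA}_\lambda(\Ci)$ that agrees with $f$ off $A$, still preserves $\lambda$, and is $C^0$-close to $f$ for $\delta$ small, by the collected facts on window perturbations in Section~\ref{sec:prelim}. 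Crucially, the new critical values created inside $A$ are all equal to $v-\delta$ (the bottom of the window $U$) or to interior heights determined by the scaled copies, and by shrinking the window slightly \emph{asymmetrically} — i.e.\ taking $A'\subset A$ with $f(A')=(v-\delta,v-\eta)$ for a generic small $\eta>0$ — the single critical value $v$ is removed and replaced by values clustered near $v-\eta$, which we can push off the finite bad set. Iterating over $j=1,\dots,k$ (each step a perturbation supported on an arc disjoint from, or much smaller than, the supports of previous steps, and each introducing only a controlled finite increase in the number of critical points, all with fresh values) yields a map $h$ with all critical values distinct.

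For the slope bound: a regular $m$-fold window perturbation of an affine branch of slope $\sigma$ on an arc $A$ replaces it with $m$ affine branches each of slope $\pm m\sigma$ on subarcs of $A$ (the horizontal compression by $1/m$ forces the slope to multiply by $m$ to preserve the vertical extent, and the measure-preserving constraint is automatic since $m$ copies of width $|A|/m$ cover $A$). Hence slopes only increase in absolute value, so if $|f'|\geq s$ wherever defined, then $|h'|\geq s$ as well; the asymmetric shrinking of the window multiplies slopes by a factor $\geq 1$ as well and so does no harm. I would state this slope computation as a short explicit lemma or fold it into the argument, since it is the one place where the ``Furthermore'' clause is actually verified.

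The main obstacle I anticipate is bookkeeping rather than any deep difficulty: when I perturb near $c_j$ to fix its critical value, the perturbation creates several \emph{new} critical points, and I must ensure (a) their critical values are all distinct from each other and from everything previously fixed, and (b) later perturbations near $c_{j+1},\dots,c_k$ do not resurrect a coincidence. This is handled by always choosing the perturbation parameter (the window depth $\eta$, and if needed the partition) from the complement of a finite set — the finitely many forbidden heights — which is possible because that complement is dense; and by choosing successive window supports small enough and, where they must overlap an earlier-modified region, nested strictly inside it so the earlier distinctness is inherited. A clean way to organise this is downward induction on the number of coincident pairs among critical values: each step strictly decreases it while keeping the slope bound and staying $C^0$-close, and after finitely many steps no coincidences remain. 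I would also remark that the same argument works verbatim on $[0,1]$, matching the paper's recurring comment, although that is not needed here.
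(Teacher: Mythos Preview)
Your plan has a genuine gap at its core mechanism. A window perturbation on an arc $A$ replaces $f|_A$ by rescaled copies of $f|_A$, and by definition each copy has the \emph{same image} $f(A)$. So if $c\in A$ is a local maximum with value $v=f(c)$ and you take $A$ to be a small neighbourhood of $c$ with $f(A)=[v-\delta,v]$, then every one of the $m$ copies in the window perturbation again attains the value $v$ at its top and $v-\delta$ at its bottom; you have not moved the critical value $v$ at all, only multiplied the number of critical points realising it. Your ``asymmetric shrinking'' with $A'\subset A$ and $f(A')=(v-\delta,v-\eta)$ cannot contain $c$ (since $f(c)=v\notin f(A')$), so perturbing on $A'$ leaves $c$ and its value $v$ untouched. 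More fundamentally, you cannot lower a single peak in isolation and stay in $\mathrm{PA}_\lambda(\Ci)$: removing mass from the preimage of $(v-\eta,v)$ near $c$ must be compensated by adding preimage mass for those same values somewhere else, and your proposal never says where.

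This is exactly what the paper's argument supplies and what your sketch is missing. The paper first invokes the structural Lemma~\ref{lem:turning}: for any turning point $c$ there is either another turning point $c'$ of the opposite type with the same critical value, or a non-critical $x$ with $f(x)=f(c)$. In the second case a small $3$-fold perturbation near $x$ manufactures such a $c'$. Then, with a max at $c$ and a min at $c'$ sharing the value $y$, the paper performs a coupled modification at \emph{both} points (the ``flip'' of Figure~\ref{fig:7}): the excess preimage mass created above $y$ by flipping near $c'$ exactly cancels the deficit created below $y$ by flipping near $c$, so Lebesgue measure is preserved while the common critical value $y$ is destroyed. Your inductive bookkeeping and slope remarks are fine, but the perturbation you describe does not do what you need; you should replace it with this paired construction (or an equivalent compensation argument) to make the proof go through.
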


\begin{proof}
Suppose that $c$ is a critical point of $f$, assume without loss of generality that $y := f(c)$ is a local maximum. Let $L_y$ denote the set of turning points for which the  map $f$ has the  critical value $y$.  If $|L_y| = 1$ we have that $f$ has no pairs of turning points with the critical value $y$ which is what we want. 
     
     For $|L_y| > 1$, by Lemma~\ref{lem:turning} if we do not have a pair of critical points as in case \eqref{lem:turning(1)} then all the points in $L$ are local maxima or local minima as in case \eqref{lem:turning(2)}. In the latter case  we perturb $f$ with a regular $3$-fold perturbation (see Figure~\ref{fig:8}),
     which will create one new critical point with critical value $y$ and this value necessarily put us back in case (\ref{lem:turning(1)}) of Lemma~\ref{lem:turning}. The corresponding set $L_y$ is increased by 1 element.
     
     Choose a critical point $c'$ with $f(c')=y$ with local minimum at $c'$. For a small enough $\eps>0$ we first apply a $3$-fold perturbation, as in the dotted red lines middle picture of Figure~\ref{fig:7}. Note that the length of the domain of the two adjacent folds from the $3$-fold perturbations which contain the critical value are required to be of the same length $\eps$. 
     The final perturbation for this pair of critical points is the union of the leftmost fold of the $3$-fold perturbations and the flip of the rest of the $3$-fold perturbations over the critical value axis (as shown with thick lines on the rightmost picture on Figure~\ref{fig:7}). Since these two intervals have the same length $\eps$, the perturbed map preserves Lebesgue measure and the corresponding set $L_y$ is decreased by two elements.  
     
    When we combine the two steps the set $L_y$
has decreased by at least one element (possibly two).
    Repeating these procedures a finite number of times
     produces a map with $|L_y| \le 1$, and then repeating the procedure for other critical values produces the desired map.
     
      Note that the perturbations we perform can be arbitrarily small, and do not decrease the absolute value of the  slopes of the monotone pieces of $f$ and preserve Lebesgue measure. 
\end{proof}

The following statement is proven in \cite[Lemma 11]{BCOT2}.
\begin{lemma}\label{l:6}
Let $F$ be a lifting modulo $1$ of a piecewise affine circle map $f$ with nonzero slopes and such that its derivative does not exist at a finite set $E$. Then $f\in C_{\lambda}(\Ci)$ is equivalent to the property
\begin{equation}
   \forall~y\in [0,1)\setminus F(E)\colon~\sum_{x\in F^{-1}(y)}\frac{1}{\vert F'(x)\vert}=1.
 \end{equation}
 \end{lemma}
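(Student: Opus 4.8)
The plan is to compute $\lambda(f^{-1}(A))$ for an arbitrary Borel set $A\subseteq\Ci$ directly from the piecewise-affine structure, and then to read off the condition $f\in C_\lambda(\Ci)$ as the statement that an explicit density function is identically $1$ off a finite set. \textbf{Step 1 (decompose into affine pieces).} Since the derivative of $F$ fails to exist exactly on the finite set $E$, on each connected component of $[0,1)\setminus E$ (taken cyclically) the map $F$ is affine with a nonzero slope; hence $[0,1)$ is the union of finitely many closed arcs $I_1,\dots,I_m$ with pairwise disjoint interiors such that $F|_{I_j}$ is affine with slope $s_j\ne 0$ and $\partial I_j\subseteq E$. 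In particular $f|_{I_j}$ is injective onto a subarc $J_j:=f(I_j)$ with $\partial J_j=f(\partial I_j)\subseteq F(E)$, and for every Borel $A$ affinity gives $\lambda\bigl(f^{-1}(A)\cap I_j\bigr)=|s_j|^{-1}\lambda(A\cap J_j)$.

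\textbf{Step 2 (the density).} Summing Step 1 over $j$, since the interiors of the $I_j$ cover $[0,1)$ up to a finite set, for every Borel $A$ we obtain
\[
\lambda(f^{-1}(A))=\sum_{j=1}^{m}\frac{1}{|s_j|}\,\lambda(A\cap J_j)=\int_A g\,\di\lambda,\qquad g(y):=\!\!\sum_{j:\,y\in J_j}\!\!\frac{1}{|s_j|}.
\]
I would then identify $g$ with the sum appearing in the statement: if $y\notin F(E)$, then every $x\in F^{-1}(y)$ lies outside $E$ (otherwise $y=F(x)\in F(E)$), so $F'(x)$ exists and equals $s_j$ for the unique $j$ with $y\in\Int(J_j)$ and $x\in\Int(I_j)$; conversely each such $j$ contributes exactly one such $x$. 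This yields a bijection $F^{-1}(y)\leftrightarrow\{j:y\in\Int(J_j)\}$ preserving $|\,\cdot\,|$ of the slope, whence $g(y)=\sum_{x\in F^{-1}(y)}|F'(x)|^{-1}$ for all $y\notin F(E)$. Moreover $g$ is constant on each of the finitely many connected components of $\Ci\setminus F(E)$, because the index set $\{j:y\in\Int(J_j)\}$ does not change as $y$ moves within such a component.

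\textbf{Step 3 (the equivalence).} If the displayed identity holds for all $y\notin F(E)$, then $g=1$ off the $\lambda$-null set $F(E)$, so $\lambda(f^{-1}(A))=\int_A g\,\di\lambda=\lambda(A)$ for every Borel $A$, i.e.\ $f\in C_\lambda(\Ci)$. Conversely, if $f\in C_\lambda(\Ci)$, then $\int_A(g-1)\,\di\lambda=0$ for all Borel $A$, so $g=1$ $\lambda$-a.e.; since $g$ takes a single value on each component of $\Ci\setminus F(E)$ and these components have full measure, that value is $1$ on each of them, which is exactly the asserted property.

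\textbf{Main obstacle.} The mathematics is elementary; the only real care needed is the bookkeeping in Step 1--2: one must set up $\{I_j\}$ and $J_j=f(I_j)$ so that $f^{-1}(A)$ genuinely equals $\bigsqcup_j (f|_{I_j})^{-1}(A\cap J_j)$ up to a finite set, and verify that every point where $g$ could fail to be locally constant, or where a preimage branch could be non-differentiable, is absorbed into $F(E)$ (this is why the stated identity is required only off $F(E)$). A possible degree $\degr(f)\ne1$ causes no trouble here, since the whole argument takes place within the single fundamental domain $[0,1)$ and the arcs $J_j$ are allowed to be arbitrary subarcs of $\Ci$.
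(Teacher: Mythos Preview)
Your argument is correct and is precisely the standard one: compute the pushforward density of $\lambda$ under $f$ via the piecewise-affine change of variables, identify it with the reciprocal-slope sum, and use that a piecewise-constant density equals $1$ $\lambda$-a.e.\ iff it equals $1$ everywhere off the finite set $F(E)$. The paper does not actually prove this lemma; it simply cites \cite[Lemma~11]{BCOT2}, so there is nothing to compare against here, but your write-up is exactly the kind of proof one would expect behind that citation. The only cosmetic remark is that the statement is phrased in terms of the lifting $F$ while your computation is really about $f$ on $\Ci$; you implicitly (and correctly) read $F^{-1}(y)$ as the finite set of preimages inside one fundamental domain with $|F'(x)|=|f'(x)|$, and your closing paragraph on the degree makes clear you are aware of this identification.
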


Recall that we denote by $\mathcal A$ the set of all nondegenerate proper subarcs of $\mathbb S^1$. For $f\in C_{\lambda}(\mathbb S^1)$ and $A\in\mathcal A$, a nontrivial connected component $A_j$ of $f^{-1}(A)=\bigcup A_j$ satisfies $\lambda(A_j)>0$. It is always an element of $\mathcal A$.

\begin{lemma}\label{lem:leo-exp}
If $h\in \mathrm{PA}_{\lambda}(\mathbb S^1)$ has different critical values and slope strictly greater than $4$ then there are $\delta = \delta(h) >0$ and $\eta = \eta(h) >0 $
such that for every $A\in \mathcal{A}$,
\begin{enumerate}

\item\label{leo-exp1} either $h(A)=\mathbb{S}^1$ or $(1+\delta)\lambda(A)<\lambda(h(A))$. 

\item\label{leo-exp2} for any $A \in \mathcal A$ whose complement contains at most one critical point we have $\lambda(H(I)) > 1+ \eta $ where $H$ is a lifting of $h$ and $\phi(I) = A$. 

\item\label{leo-exp3} for each $(\alpha,\beta)$ the constant  $\delta(T_{\alpha,\beta}(h)) = \delta(h)$.
\item\label{leo-exp4} for each $(\alpha,\beta)$  the constant $\eta(T_{\alpha,\beta}(h)) = \eta(h)$.
\end{enumerate}
\end{lemma}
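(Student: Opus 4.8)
The plan is to prove parts \eqref{leo-exp3} and \eqref{leo-exp4} first, since they are essentially free once one sets up the constants $\delta,\eta$ correctly, and then to establish \eqref{leo-exp1} and \eqref{leo-exp2}, which carry the real content. For the rotation-invariance, I would observe that $r_\beta$ only reparametrizes the domain, while $r_\alpha$ only translates the range; neither operation changes the absolute values of slopes, the number of critical points, or the property of having distinct critical values (this last point uses that the critical values of $T_{\alpha,\beta}(h)$ are the $r_\alpha$-images of those of $h$, and $r_\alpha$ is injective). Moreover, for an arc $A$ and a lifting $G$ of $T_{\alpha,\beta}(h)$, one has $G = R_\alpha \circ H \circ R_\beta$ for suitable liftings, and $\lambda(G(I)) = \lambda(H(R_\beta(I)))$ while $R_\beta(I)$ is an interval of the same length as $I$ whose complement contains a critical point of $H$ iff the complement of $I$ contains a critical point of $G$. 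Hence the very inequalities defining $\delta$ and $\eta$ transfer verbatim, and one may simply \emph{define} $\delta(T_{\alpha,\beta}(h)) := \delta(h)$ and $\eta(T_{\alpha,\beta}(h)) := \eta(h)$; this is the content of \eqref{leo-exp3} and \eqref{leo-exp4}.

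For \eqref{leo-exp2}, the idea is that if $A\in\mathcal A$ (a proper subarc) has complement containing at most one critical point of $h$, then on the interior of the complementary arc $h$ has at most one turning point, hence the complementary arc is a union of at most two arcs of monotonicity; on each such arc $h$ (equivalently $H$) expands length by a factor $>4$. Writing $J := I^c$ (an interval of length $1 - \lambda(A)$) and splitting $J$ at the at-most-one critical point into $J_1, J_2$, we get $\lambda(H(J_1)) + \lambda(H(J_2)) > 4\lambda(J)$, but $H(J) = H(J_1)\cup H(J_2)$ only has total length $\le 1$ (it lies in a fundamental domain up to the degree), so $\lambda(J) < 1/4$, i.e.\ $\lambda(A) > 3/4$ — wait, that is the wrong direction; rather the correct deduction is that such an $A$ must itself be long. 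Concretely: if the complement $J$ of $A$ contains at most one critical point, then $A$ contains all but at most one critical point, so $A$ contains at least $k-1$ critical points where $k = |\crit(h)| \ge 2$ (a slope $>4$ map cannot be monotone), and one estimates $\lambda(H(I))$ from below directly: $I$ meets at least $k-1 \ge 1$ arcs of monotonicity fully plus expands; the cleanest bound is $\lambda(H(I)) \ge (\text{slope})\cdot\lambda(I) - (\text{wraparound correction})$, and since $\lambda(I) = 1 - \lambda(J) > 1 - (\text{something explicit})$, a uniform $\eta = \eta(h) > 0$ with $\lambda(H(I)) > 1 + \eta$ exists. The honest route is to set $\eta := \tfrac14\min\{\lambda(B) : B \text{ an arc of monotonicity of } h\}$ after checking the combinatorics, using that $I$ must fully contain at least one arc of monotonicity and the slope-$>4$ expansion there contributes a definite surplus over length $1$; I expect this bookkeeping to be routine but slightly fiddly with the wraparound.

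For \eqref{leo-exp1}, the dichotomy is: given $A\in\mathcal A$ with $h(A)\ne\mathbb S^1$, I want $\lambda(h(A)) > (1+\delta)\lambda(A)$. If $A$ contains no critical point, then $h|_A$ is monotone with slope $>4$, so $\lambda(h(A)) > 4\lambda(A) \ge (1+\delta)\lambda(A)$ for, say, $\delta \le 3$. If $A$ contains some critical points, partition $A$ by its critical points into maximal monotone subarcs $A = A_1 \cup \dots \cup A_r$ (so $A_1, A_r$ are the "end" pieces, possibly short, and $A_2,\dots,A_{r-1}$ are "full" pieces between consecutive critical points of $h$, each of a length bounded below by the minimal gap between critical values — here is where "different critical values" enters, to guarantee that the folds do not exactly cancel). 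Since $h(A) = \bigcup h(A_j)$ and $h(A)\ne\mathbb S^1$, the total length $\lambda(h(A))$ is at least the length of the longest single $h(A_j)$, which is $> 4\lambda(A_j)$; combined with a Lebesgue-measure-preservation/counting argument (each point of $\mathbb S^1$ has boundedly many preimages, controlled via Lemma~\ref{l:6}), one extracts $\lambda(h(A)) > (1+\delta)\lambda(A)$ with $\delta = \delta(h)$ depending only on the minimal monotone-piece length and the slope. I expect \eqref{leo-exp1} to be the main obstacle: the naive bound $\lambda(h(A)) \ge \max_j \lambda(h(A_j)) > 4\max_j\lambda(A_j)$ is not by itself $> (1+\delta)\sum_j \lambda(A_j)$ when $A$ spans many folds, so one genuinely needs that the images $h(A_j)$ overlap a lot (they must, since their total length $\sum 4\lambda(A_j) > 4\lambda(A)$ but their union has measure $\le 1 < 4\lambda(A)$ once $\lambda(A) \ge 1/4$), and in the complementary regime $\lambda(A) < 1/4$ one argues that few critical points fit in $A$ and controls the end-pieces; reconciling these two regimes into a single clean $\delta$ is the delicate point, and I would handle it by first disposing of $\lambda(A) \ge 1/4$ via $\lambda(h(A)) \ge$ a fixed constant bounded away from $1/4$ (using part \eqref{leo-exp2}-type reasoning), and then treating small $A$ with at most a bounded number of folds by a direct expansion estimate.
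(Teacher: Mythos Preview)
Your handling of \eqref{leo-exp3} and \eqref{leo-exp4} is fine and matches the paper.

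For \eqref{leo-exp2} there is a genuine gap. Your proposed estimate $\lambda(H(I)) \ge (\text{slope})\cdot\lambda(I) - (\text{wraparound correction})$ is simply false: once $I$ contains critical points, $H$ is not monotone on $I$, and $\lambda(H(I))$ equals $\max_I H - \min_I H$, which is governed by the critical \emph{values} hit, not by how many monotone laps are traversed. This is precisely where the hypothesis ``different critical values'' enters, and your argument for \eqref{leo-exp2} never invokes it. The paper proceeds differently: it first shows (using the $\ge 4$-preimages argument from Lemma~\ref{l:6}) that when the complement of $A$ has at most one critical point one already has $h(A)=\mathbb{S}^1$; then it observes that $H(I) \supset [H(C_3),H(C_4)]$ for two distinct critical points $C_3 \ne C_4$ with $H(C_4)-H(C_3) \ge 1$, and the distinctness of critical values forces this to be strictly larger than $1$. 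The constant is then set as
\[
1+\eta := \min\{H(C)-H(C') : |C-C'|<1,\ C\ne C',\ H(C)-H(C')>1\},
\]
which manifestly collapses to $1$ if two critical values coincide. Your proposed $\eta = \tfrac14\min\{\lambda(B)\}$ over arcs of monotonicity is not supported by any valid inequality in your sketch.

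For \eqref{leo-exp1} your direct case-analysis is a different route from the paper and, as you yourself note, does not close. The paper instead defines $\delta := \inf\{\lambda(h(A))/\lambda(A) - 1 : h(A)\ne\mathbb{S}^1\}$ and proves $\delta>0$ by contradiction: take $A_n$ realising the infimum, pass to a Hausdorff limit $A'$, show $\lambda(A')>0$ (using slope $>4$), then show $\lambda(A')=1$ (using that every point has $\ge 4$ preimages, via Lemma~\ref{l:6}), and finally derive a contradiction from the complement $B_n$ eventually containing at most one critical point. Your plan to handle $\lambda(A)\ge 1/4$ by ``part \eqref{leo-exp2}-type reasoning'' does not work as stated, because \eqref{leo-exp2} requires the complement of $A$ to contain at most one critical point, which $\lambda(A)\ge 1/4$ in no way guarantees. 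The compactness argument sidesteps all this bookkeeping and is what makes the proof go through.
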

\begin{proof}
Let $\delta:= \inf\{\frac{\lambda(h(A))}{\lambda(A)} -1: h(A) \ne \mathbb{S}^1\}$. Since $h$ is measure-preserving we have $\delta \ge 0$. There are arcs $A_n\in \mathcal A$ such that $h(A_n)\neq \mathbb{S}^1$ and 
$$\frac{\lambda(h(A_n))}{\lambda(A_n)}\to 1+\delta,$$ as $n\to \infty$.

We may assume that $A':=\lim A_n$ in the Hausdorff distance.
By way of contradiction, suppose that $\delta = 0$; then  $\lambda(h(A')) =\lambda(A')$.
If an interval $A$ contains at most one critical point then $\lambda(h(A))/\lambda(A) >  4/2=2$.  Thus, for all sufficiently large $n$, the interval $A_n$ can not be shorter than the shortest piece of monotonicity of $h$
and thus $\lambda(A') > 0$.

We claim that $\lambda(A') = 1$. Let $h(A') = [c,d]$. Suppose by way of contradiction that  $\lambda(A')<1$. Since the slope of $h$ is strictly greater than $4$ and the critical values of $h$ are different it follows from  Lemma~\ref{l:6} that every point has at least 4 preimages. In particular, the point $y := h(c)$ has at least 4 preimages, at most one of them is a critical point, at most two of them are the endpoints of $A'$. We have shown that there is at least one  preimage in the interior of $A'$ which is not a turning point of $h$.  Thus  there is an arc $[c',c]$ with
$c' < c$ contained in $h(A')$, and so 
$h(A') \supsetneq  [c,d]$, a contradiction. So $\lambda(A') =1$, which finishes the proof of \eqref{leo-exp1}.

 Let $B_n:=\mathbb{S}^1\setminus A_n$.
But then $\lambda (B_n)\to 0$ and so we can choose $n_0$ so that for $n \ge n_0$ the arc $B_n$ contains at most one turning point of $h$. Since
 each point in  $h(B_n)$ has at most two preimages in $B_n$ it must have at least one preimage in $A_n$, in other words $h(B_n) \subset h(A_n)$. But then $h(A_n)\supset h(B_n \cup A_n) = h(\mathbb{S}^1)=\mathbb{S}^1$ which is a contradiction.
 
 \begin{figure}
    \centering
   \begin{tikzpicture}[scale=3]
   \draw(0,0)--(0,1)--(1,1)--(1,0)--(0,0);
   \draw (0.1,0.3)--(0.2,0.8);
   \draw (0.3,0.8)--(0.4,0.3);
   \draw (0.55,0.3)--(0.6,0.55)--(0.65,0.3);
   \draw (0.8,0.3)--(0.9,0.8);
   \draw[dashed] (0,0.55)--(1,0.55);
   \draw[dashed] (0.15,0.55)--(0.15,0);
   \draw[dashed] (0.35,0.55)--(0.35,0);
   \draw[dashed] (0.6,0.55)--(0.6,0);
   \draw[dashed] (0.85,0.55)--(0.85,0);
   \node at (-0.18,0.45) {\small $y-\eps$};
   \node at (-0.18,0.65) {\small $y+\eps$};
   \node at (0.15,-0.1) {\small $A_1$};
   \node at (0.35,-0.1) {\small $A_2$};
   \node at (0.6,-0.1) {\small $B_n$};
   \node at (0.85,-0.1) {\small $A_3$};
   \draw[dotted] (0,0.45)--(1,0.45);
   \draw[dotted] (0,0.65)--(1,0.65);
   \node at (1.2,0.5) {\small $h(B_n)$};
   \node at (0,-0.1) {$0$};
	\node at (1,-0.1) {$1$};
	\node at (-0.1,1) {$1$};
   \draw[ultra thick] (0.125,0)--(0.175,0);
   \draw[ultra thick] (0.325,0)--(0.375,0);
    \draw[ultra thick] (0.825,0)--(0.875,0);
    \draw[ultra thick] (0.575,0)--(0.625,0);
    \draw[dotted](0.575,0)--(0.575,0.45); 
    \draw[dotted](0.625,0)--(0.625,0.45); 
    \draw[ultra thick] (0,0.45)--(0,0.65);
    \draw[ultra thick] (1,0.45)--(1,0.55);
    \end{tikzpicture}
    \caption{Visual aid for Lemmas~\ref{lem:leo-exp}\eqref{leo-exp2} and \ref{lem:compdist}.} 
    \label{twoComponents}
\end{figure}
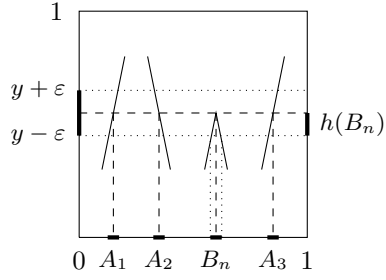
 
 Suppose that $A \in \mathcal A$ and $B = \mathbb{S}^1 \setminus A$ contains at most one critical point.
 Suppose $I = [a,b] \subset J= [a,a+1]$, so $\phi(J) = \mathbb{S}^1$ (recall that $\phi(I) = A$).
 In the previous paragraph we showed that $h(A) = \mathbb{S}^1$. In what follows we denote the critical points of $h$ by $c_i$ and corresponding critical points of $H|_J$ by $C_i$.
 Choose the largest interval $[C_1,C_2] \subset I$ where $C_1,C_2$ are critical points of $H$.  The argument of the previous paragraph shows that if there is a critical point of $h$ in $B$ then $h(c) \in h(A)$ and thus $H(C) \in H(I)$.  All other critical points of $H|_J$ are contained in $[C_1,C_2]$ thus the interval $H([C_1,C_2])$ contains all the critical values of $H|_J$. Furthermore there are distinct critical points $C_3,C_4 \in J$ such that $H([C_1,C_2]) = [H(C_3),H(C_4)]$. In the previous paragraph we showed $h(A) \supset \mathbb{S}^1$, thus $\lambda([H(C_3),H(C_4)])\ge 1$.
 Since $h$ has distinct critical values this inequality is strict.  It is sufficient to choose  $\eta$ to satisfy
 $$1+\eta := \min\{ H(C) - H(C'): |C-C'| < 1, C \ne C' , H(C)-H(C')>1\},$$
 where minimum runs over all critical points $C,C'$ of $H$. This finishes the proof of (\ref{leo-exp2}) and  (\ref{leo-exp4}).

We turn to statement \eqref{leo-exp3}.  Fix $(\alpha,\beta)$
such that $T_{\alpha,\beta}(h)$ is leo.
Consider arcs $A_n$ such that 
$$\frac{\lambda(h(A_n))}{\lambda(A_n)}\to 1+ \delta(h),$$ as $n\to \infty$.  Let
$A_n' := r_{-\beta} A_n$. Then since rotations are measure-preserving we have
$$\frac{\lambda(T_{\alpha,\beta}(h)(A'_n))}{\lambda(A'_n)} = \frac{\lambda(h(A_n))}{\lambda(A_n)}.$$ 
Therefore $\frac{\lambda(T_{\alpha,\beta}(h)(A'_n))}{\lambda(A'_n)} \to 1+\delta(h)$
and so $\delta(T_{\alpha,\beta}(h)) \le \delta(h)$.
Reversing the role of the maps proves the desired equality.
\end{proof}

Lemma~\ref{lem:leo-exp} yields the following corollary.

\corleo*

For $h\in \mathrm{PA}_{\lambda}(\mathbb{S}^1)$ and $A \in \mathcal{A}$ let
$$\tau(A):=\max\{\dist(A',A'')\colon~A',A''\text{ non-degenerate components of }h^{-1}(A)\}.
$$

\begin{lemma}\label{lem:compdist}
Suppose $h\in \mathrm{PA}_{\lambda}(\mathbb S^1)$ is leo with slope strictly greater than $4$ and different critical values. Let $\kappa(h)$ denote the minimal distance between the critical values of $h$. If $A\in\mathcal A$ and $\lambda(A) \le 
{\kappa(h)}$ then the set  $h^{-1}(A)$ has at least two non-degenerate components. 
Moreover there exists $\xi(h) > 0$ such that 
\begin{enumerate}
\item\label{lem:comp1} $\inf_{A\in\mathcal A,~\lambda(A) \le
\kappa} \tau(A)> \xi(h)
$
 \item\label{lem:comp2} for each $A\in\mathcal A$ with $\lambda(A) \le\kappa$ there are arcs $A',A''\subset \mathcal{A}$ such that $\dist(A',A'')\ge \xi(h)$ and $h(A')=h(A'')=A$.
 \item\label{lem:comp3} for each $(\alpha,\beta)$ we have  $\xi(T_{\alpha,\beta}(h)) = \xi(h)$.
  \item\label{lem:comp4} for each $(\alpha,\beta)$ we have  $\kappa(T_{\alpha,\beta}(h)) = \kappa(h)$.
 \end{enumerate}
\end{lemma}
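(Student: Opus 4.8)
The plan is to combine the covering/expansion facts from Lemma~\ref{lem:leo-exp} with the combinatorics of critical values. First I would establish the qualitative claim that $h^{-1}(A)$ has at least two non-degenerate components whenever $\lambda(A)\le\kappa(h)$. Since the slope is strictly greater than $4$ and the critical values are distinct, Lemma~\ref{l:6} gives that every point has at least four preimages, so any $A\in\mathcal A$ has a nonempty preimage; the issue is that all of $h^{-1}(A)$ could a priori lie in a single monotone lap. But on a single monotone lap $h$ is injective, so $h^{-1}(A)$ restricted to one lap is a single arc $A_0$ with $\lambda(A_0)<\lambda(A)/4\cdot\text{(nothing)}$ — more precisely, since the slope exceeds $4$ we have $\lambda(h(A_0))>4\lambda(A_0)$, forcing $\lambda(A_0)<\lambda(A)/4$, so $A_0$ cannot exhaust the full $h$-preimage unless $\lambda(A)$ is large; quantitatively, if $h^{-1}(A)$ had a single non-degenerate component then, because $h$ restricted to any neighborhood of that component that avoids critical points is injective and expands by more than $4$, we would need a critical value inside $A$ or $A$ would extend past a critical value, contradicting $\lambda(A)\le\kappa(h)$ once one tracks the images of the lap endpoints. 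Actually the cleanest route: if $h^{-1}(A)=A_0$ is a single arc, then $h|_{A_0}$ is a homeomorphism onto $A$ whose only possible failure of injectivity is at an interior critical point; but an interior critical point of $h$ in $A_0$ would force $h(A_0)$ to contain the corresponding critical value in its interior together with points on both sides within distance $\lambda(A)$, and a second critical value argument via four preimages then produces the second component. I would write this carefully but it is short.

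For the quantitative statements, the key observation is a compactness argument analogous to the one used in Lemma~\ref{lem:leo-exp}. Define
$$\xi(h):=\inf\bigl\{\tau(A): A\in\mathcal A,\ \lambda(A)\le\kappa(h)\bigr\}$$
and suppose toward a contradiction that $\xi(h)=0$. Take arcs $A_n$ with $\lambda(A_n)\le\kappa(h)$ and $\tau(A_n)\to 0$; passing to a subsequence, $A_n\to A'$ in the Hausdorff metric with $\lambda(A')\le\kappa(h)$. The two non-degenerate components $A_n',A_n''$ of $h^{-1}(A_n)$ have $\dist(A_n',A_n'')\to 0$, so (again passing to subsequences) both converge to arcs contained in a common component of $h^{-1}(A')$, or to a single point. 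Here I would use that $h$ is piecewise affine with finitely many laps: each non-degenerate component of $h^{-1}(A_n)$ lies in a single lap, and two components in the \emph{same} lap is impossible since $h$ is injective there; hence $A_n'$ and $A_n''$ lie in two distinct laps $L,L'$ (infinitely often the same pair, by pigeonhole). The distance between $L$ and $L'$ is either zero — meaning they share an endpoint, necessarily a critical point $c$ — or bounded below by the minimal gap between non-adjacent laps, a positive constant. In the latter case $\dist(A_n',A_n'')$ is bounded below, contradiction. In the former case, $A_n'$ and $A_n''$ accumulate at the shared critical point $c$ from opposite sides, so $A_n$ must accumulate at the critical value $h(c)$; but then for large $n$, since $\lambda(A_n)\le\kappa(h)$, the arc $A_n$ contains no critical value other than possibly $h(c)$, and one shows (using the four-preimages property) that $h^{-1}(A_n)$ has a non-degenerate component in a \emph{third} lap bounded away from $c$, so $\tau(A_n)$ is bounded below after all — contradiction. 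This yields $\xi(h)>0$ and simultaneously (\ref{lem:comp1}); statement (\ref{lem:comp2}) is just a restatement, taking $A'$ and $A''$ to be the two components realizing (or nearly realizing) $\tau(A)$, which have $h(A')=h(A'')=A$ by definition of component of $h^{-1}(A)$ — one should take $A',A''$ among the two components witnessing $\tau(A)>\xi(h)$, noting $h$ maps each \emph{onto} $A$ because $A$ is small enough to avoid critical values and the restriction to a lap is a homeomorphism onto its image, which equals $A$ after intersecting appropriately; a small amount of care is needed here to ensure the restricted images are all of $A$ rather than proper subarcs, which follows from $\lambda(A)\le\kappa(h)$ and connectedness of the preimage structure on each relevant lap.

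Finally, the rotation-invariance statements (\ref{lem:comp3}) and (\ref{lem:comp4}) follow exactly as in Lemma~\ref{lem:leo-exp}\eqref{leo-exp3}--\eqref{leo-exp4}: composing with an inner rotation $r_\beta$ only relabels points in the domain, so it sends non-degenerate components of $h^{-1}(A)$ bijectively to non-degenerate components of $T_{\alpha,\beta}(h)^{-1}(r_\alpha A)$ while preserving all Euclidean distances (rotations are isometries of $\mathbb S^1$), and composing with the outer rotation $r_\alpha$ relabels the target, again an isometry that does not change distances between preimage components; moreover the set of critical values of $T_{\alpha,\beta}(h)$ is $r_\alpha(CV(h))$, so its minimal gap $\kappa$ is unchanged, giving (\ref{lem:comp4}). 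Running the defining infimum for $\xi(T_{\alpha,\beta}(h))$ over all small arcs $A$, pulling back by $r_{-\alpha}$ and then by $r_{-\beta}$, shows $\xi(T_{\alpha,\beta}(h))\le\xi(h)$, and reversing the roles gives equality, which is (\ref{lem:comp3}). The main obstacle is the quantitative lower bound on $\tau$: handling the degenerate case where two preimage components live in laps that meet at a critical point requires the ``third lap'' argument powered by Lemma~\ref{l:6}, and one must be careful that as $A_n$ shrinks toward a critical value, the location of this third preimage really stays bounded away from the critical point — this is where the hypothesis slope $>4$ (four preimages, at most one critical, at most two lap-boundary coincidences) is used essentially.
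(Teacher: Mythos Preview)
Your approach is essentially correct but takes a considerably more circuitous route than the paper. The paper argues directly: let $\zeta$ be the length of the shortest lap of $h$ and take $\xi(h):=\zeta$. By Lemma~\ref{l:6} every $y\in\mathbb S^1$ has at least four preimages, at most one of which is a critical point; hence there are at least three non-critical preimages, lying in three distinct laps. For any short arc $A=(y-\eps,y+\eps)$ with $\eps\le\kappa/3$, each of these preimages has a monotone neighborhood mapped by $h$ onto $A$, giving three disjoint arcs $A_1,A_2,A_3$ with $h(A_i)=A$. Among three arcs sitting in three distinct laps, some pair must be separated by an entire intervening lap, so $\dist(A_i,A_j)\ge\zeta$. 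Rotation-invariance of $\zeta$ gives \eqref{lem:comp3} and \eqref{lem:comp4} immediately. In other words, the paper uses the ``third lap'' observation you reach at the very end of your contradiction argument as the \emph{starting point}, which eliminates the compactness machinery entirely.

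One point in your write-up deserves correction: you assert that a component $A'$ of $h^{-1}(A)$ satisfies $h(A')=A$ ``by definition of component of $h^{-1}(A)$''. This is false in general --- a component only satisfies $h(A')\subset A$, and equality can fail precisely when $A'$ contains a critical point (then $h(A')$ may be a one-sided neighborhood of the critical value). You acknowledge that ``a small amount of care'' is needed, but the fix is not the one you sketch; the paper sidesteps this by building $A_1,A_2,A_3$ from the \emph{non-critical} preimages of a single point rather than from connected components of $h^{-1}(A)$, which guarantees each is contained in a single lap and maps onto $A$. Similarly, your claim that every component of $h^{-1}(A_n)$ lies in a single lap can fail when $A_n$ contains a critical value. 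Neither issue is fatal to your strategy, but both are handled for free by the paper's direct construction.
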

\begin{proof}
Since rotations are isometries, the constant $\kappa(h)$  does not depend on $\alpha$ and $\beta$.
Let $\zeta$ be the length of the shortest piece of monotonicity of $h$.
Since  $h\in \mathrm{PA}_{\lambda}(\mathbb S^1)$ is leo with slope strictly greater than $4$ and different critical values every point $y \in \mathbb{S}^1$ has at least 4 preimages (see Figure \ref{twoComponents}), where at most one of these preimages is a critical point. 
Thus for $0 < \eps \le \kappa/3$  there are at least three disjoint arcs $A_1,A_2,A_3$
such that $h(A_1)=h(A_2)=h(A_3)=(y-\eps,y+\eps)$ and $h$
is monotone on each of these arcs.  In particular two of these arcs $A_i,A_j$ are separated by a piece of monotonicity and so 
 $\dist(A_i,A_j) >  \xi(h)$.
It is easy to see that $\xi(h)\geq \zeta$. Clearly $\zeta$ does not depend on $(\alpha,\beta)$ which finishes the proof of this lemma.
\end{proof}

\subsection{Proof of Theorem~\ref{thm:main}}

Let us recall the statement of the main theorem of this paper.
\main*

\begin{proof}

We start with a set $\{h_n\} \in \mathrm{PA}_{\lambda}(\mathbb{S}^1)$ which is dense in $C_{\lambda}(\mathbb{S}^1)$, e.g. see Lemma 12 from \cite{BCOT2}. 
By applying arbitrary small $n$-fold perturbations we may assume that the slopes of maps $h_n$ are strictly greater than $4$. By Lemma~\ref{lem:13} we may also assume that every $h_n$ has pairwise different critical values. By Corollary~\ref{cor:leo} each $h_n$ is leo and thus also
 Lemma~\ref{lem:leo-exp}  and Lemma~\ref{lem:compdist} hold for $h_n$; we call the constants corresponding to these two lemmas $\kappa_n,\delta_n, \eta_n, \gamma_n$. 
 We can additionally assume that $\eta_n$ is smaller than the length of the shortest piece of  monotonicity of $h_n$ and $\delta_n<1$.

Set $\eps_n<\frac{\delta_n}{2}\min\{\eta_n/6,\kappa_n/3\}$. We will show that the conclusions of the theorem hold for 
$$\mathcal{O}:=\bigcup_{(\alpha,\beta)\in [0,1)^2} T_{\alpha,\beta}\Big(\bigcup_{n\in \N} B(h_n,\eps_n)\Big).$$

Let us prove that any $g\in B(h_n,\eps_n)$ is leo. Recall that $G$ denotes a lifting of $g$ and $H_n$ a lifting of $h_n$.

By Lemma~\ref{lem:leo-exp} and the additional assumption on $\eta_n$ we have
that for every $g\in B(h_n,\eps_n)$:
\begin{itemize}
   \item[(p1)]\label{item1} For an interval $I\subset \mathbb{R}$, if $1-\eta_n<\lambda(I)<1$ then $H_n(I)>1+\eta_n$ and thus $G(I)> 1+\eta_n-2\eps_n>1$ 
   (which implies that $g(A) = \mathbb{S}^1$ for any arc $A\in \mathcal{A}$ satisfying $1-\eta_n < \lambda(A) < 1$).
   \end{itemize}
   
   We claim that additionally
    \begin{itemize}
\item[(p2)] If $A\in\mathcal A$ and $\lambda(A)<\alpha_n:=\min\{\xi_n/2,\kappa_n/3\}$ then $g^{-1}(A)$ has at least two non-degenerate components.
\end{itemize}

 Let us prove (p2). Observe that 
 $\lambda(B(A,\eps_n))=\lambda(A)+2\eps_n<\kappa_n/3+2\eps_n\leq \kappa_n$. So we can apply
 Lemma~\ref{lem:compdist} \eqref{lem:comp2}
to conclude that  $h_n^{-1}(B(A, \eps_n))$ has at least two non-degenerate components $A'$ and $A''$ such that $h_n(A')=h_n(A'')=B(A, \eps_n)$ and $\dist(A',A'')>\xi_n$. 
Thus since $\rho(g,h_n)<\eps_n$
 we have $A \subset g(A')\cap g(A'')$. Thus there exist intervals $Q'\subset A', Q''\subset A''$ so that $g(Q')=g(Q'')=A$ with $\lambda(Q'),\lambda(Q'')>0$. 
Furthermore, since $\dist(A',A'')>\xi_n$ and $Q' \subset A'$ and $Q''\subset A''$ we conclude $\dist(Q',Q'')>\xi_n$ as well, i.e.,
 $Q',Q''$ are distinct non-degenerate components of $g^{-1}(A)$ as required.

Fix $g \in B(h_n,\eps_n)$. If  $g$ is topologically mixing for every arc $A \in \mathcal{A}$ we have $\lambda(g^n(A)) \to 1$. By (p1) we get $g^n(A) = \mathbb{S}^1$ for some sufficiently large $n$.
Thus we have
\begin{align*}g\text{ is not leo}~\implies~g~\text{is not topologically mixing}.
\end{align*}

In what follows we will repeatedly use the following fact: if $Q \in \mathcal{A}$
then $Q \subset g^{-1}(g(Q))$, thus since $g$ is measure-preserving we have $\lambda(Q) \le \lambda(g(Q))$.

If $g$ is not topologically mixing then by definition there are arcs
$A,\tilde A\in\mathcal A$ such that $g^{n_k}(A)\cap \tilde A=\emptyset$ for an  infinite sequence $\{n_k\}_{k\in \N}$. 
In particular,
$\lambda(g^{n_k} (A)) \le 1-\lambda(\tilde A)$ for each  $k$. By the above fact, the sequence
$\{\lambda(g^{n_k}(A))\}_{n=0}^{\infty}$
is non-decreasing, thus since  $\mathbb S^1$ is
compact, there exists an arc $B\in\mathcal A$ such that  for some
subsequence $\{{n_k}_i\}_{i\in \N}$,
\begin{equation}
\label{en:1}
\lim_{i\to\infty}g^{{n_k}_i}(A)=B
\end{equation}
satisfying $\lambda(A) \le \lambda(B) \le 1 - \lambda(\tilde A) < 1$.

Suppose that $a := \lambda(g(B))- \lambda(B) > 0$, then since $g$ is continuous it follows that there exists a $K\in \N$ such that  $\lambda(g^{{n_k}_i +1}(A))-\lambda(B) > a/2$ for every $i \ge K$. 
Again applying the above fact yields $\lambda(g^{n}(A)) - \lambda(B) > a/2$ for every $n \ge n_K$. This holds in particular for all ${n_k}_i$ for $i$ sufficiently large which contradicts \eqref{en:1}. Thus we have shown that
\begin{equation}\label{en:2}
    \lambda(g(B)) = \lambda(B) \in (0,1)
\end{equation}

This implies that the set $g^{-1}(g(B))$ has only one non-degenerate component; hence by (p1) and (p2), \begin{equation}\label{bounds}\alpha_n\leq  \lambda(B)  \leq 1-\eta_n.
\end{equation}

Note that by definition $\eps_n<\frac{\delta_n}{2}$,
thus combining \eqref{en:2}, the fact that $g \in B(h_n, \eps_n)$, Lemma~\ref{lem:leo-exp}\eqref{leo-exp1} (which we can apply because of the upper bound in \eqref{bounds}), and the lower estimate of \eqref{bounds} yields
$$
\lambda(B)=\lambda(g(B))\geq \lambda(h_n(B)) - 2\eps_n \geq(1+\delta_n)\lambda(B)  - 2\eps_n \geq \lambda(B)+\delta_n\alpha_n - 2 \eps_n>\lambda(B).
$$
This contradiction finishes the proof of (1).

Statement \eqref{thm:main2} follows directly from the definition of $\mathcal{O}$. Namely, for the proof of \eqref{thm:main2} it suffices to recall that $T_{\alpha,\beta}$ is an isometry and thus $g \in T_{\alpha_k,\beta_k}(B(f_{n_k},\varepsilon_{n_k}))$ if and only if $T_{\alpha,\beta}(g)\in T_{\alpha+\alpha_k,\beta+\beta_k}(B(f_{n_k},\varepsilon_{n_k}))$.
\end{proof}

\section{Generic weak-mixing}

 In \cite[Theorem 2]{BCOT} we showed that there is a dense set of interval maps in $C_{\lambda}([0,1])$ which are not ergodic, the proof works with natural modifications in  $C_{\lambda}(\mathbb{S}^1)$, thus a strong metric result in the spirit of Theorem \ref{thm:main} can not hold. Nevertheless we can still obtain the following theorem.

\tfive*

\begin{proof}
Using Theorem~\ref{exactness} and Corollary~\ref{cor:leo}, if $h\in \mathrm{PA}_{\lambda}(\mathbb S^1)$ has different critical values and slope strictly greater than $4$ then it is exact with respect to $\lambda$ (and thus also strongly and weakly mixing with respect to $\lambda$).
Thus we have
 a dense set of maps  $\{f_n: n\in \N\} \subset C_{\lambda}(\mathbb{S}^1)$  such that $T_{\alpha,\beta}(f_n)$ is weakly mixing with respect to $\lambda$ for each $\alpha,\beta\in [0,1)$.
Let $\{h_j\}_{j \ge 1}$ be a countable, dense
collection  of continuous functions in $L^1(\mathbb{S}^1 \times \mathbb{S}^1)$.
For any $f \in C_{\lambda}(\mathbb{S}^1)$ and $\ell \ge 1$, let
$$S^f_{\ell}h_j(x,y)  := \frac{1}{\ell} \sum_{k=0}^{\ell-1} h_j \big( (f \times f)^k(x,y)\big).$$
The map $f$ is weakly mixing if and only if the map $f \times f$ is ergodic, and
by the Birkhoff ergodic theorem, the map $f \times f$ is ergodic  if and only if  we have
$$\lim_{\ell \to \infty} S^f_\ell h_j(x,y)  = \int_{\mathbb{S}^1 \times \mathbb{S}^1} h_j(s,t) \, d(\lambda(s) \times \lambda (t))$$
for all $j \ge 1$.

For each $n \ge 1$ and $(\alpha,\beta)$ there exists a set $E_{n,\alpha,\beta} \subset \mathbb{S}^1 \times \mathbb{S}^1$  and a positive integer $\ell_{n,\alpha,\beta} \ge n$ such that
$\lambda (E_{n,\alpha,\beta}) > 1 - \frac1n$ and
\begin{equation}\label{est''}\Big |S^{T_{\alpha,\beta}(f_n)}_{\ell} h_j(x,y) - \int_{\mathbb{S}^1 \times \mathbb{S}^1} h_j(s,t) \,  d(\lambda(s) \times \lambda (t)) \Big | < \frac{1}{3n}
\end{equation}
for all $(x,y) \in E_{n,\alpha,\beta}$, $1 \le j \le n$, and $\ell \ge \ell_{n,\alpha,\beta}$.

For any $\alpha,\beta$, any $\bar \alpha,\bar\beta$ and any $g\in C_{\lambda}(\Ci)$ by the triangular inequality we have:
$$\begin{array}{ll}
 \Big |S^{T_{\alpha,\beta}(g)}_{\ell_{n, \bar \alpha, \bar \beta}} h_j(x,y)   -  \int_{\mathbb{S}^1 \times \mathbb{S}^1} h_j(s,t) \,  d(\lambda(s) \times \lambda (t)) \Big |    \le\\
 \qquad \Big |S^{T_{\alpha,\beta}(g)}_{\ell_{n, \bar \alpha, \bar \beta}} h_j(x,y)  - S^{T_{\alpha,\beta}(f_n)}_{\ell_{n, \bar \alpha, \bar \beta}} h_j(x,y) \Big | + \\
  \qquad \quad \Big |S^{T_{\alpha,\beta}(f_n)}_{\ell_{n, \bar \alpha, \bar \beta}} h_j(x,y)  - S^{T_{\bar\alpha,\bar\beta}(f_n)}_{\ell_{n, \bar \alpha, \bar \beta}} h_j(x,y) \Big |  +  \\
  \qquad \quad\quad  \Big |S^{T_{\bar\alpha,\bar\beta}(f_n)}_{\ell_{n, \bar \alpha, \bar \beta}} h_j(x,y)  -   \int_{\mathbb{S}^1 \times \mathbb{S}^1} h_j(s,t) \,  d(\lambda(s) \times
 \lambda (t)) \Big |
 .\end{array}$$
 
 We would like to show that the latter is small when $g$ is sufficiently close to $f_n$ and $\alpha,\beta$ is sufficiently close to $\bar\alpha,\bar\beta$ and $(x,y)\in E_{n,\bar\alpha,\bar\beta}$.
 
 Remember that $h_j$'s are continuous functions.
 Since $T_{\alpha,\beta}(g)$ is continuous in both $(\alpha,\beta)$ and $g$, the first term is bounded by $\frac{1}{3n}$ for a small neighborhood of $(\bar \alpha, \bar \beta, g)$.
 Since $T_{\alpha,\beta}$ is continuous in $(\alpha,\beta)$ the second term is bounded by $\frac{1}{3n}$ for a small neighborhood of $(\bar \alpha, \bar\beta )$ in $[0,1)^2$.
 Thus we can choose a small neighborhood 
 $I_{\bar \alpha, \bar \beta}$ of 
$(\bar \alpha, \bar \beta)$ in $[0,1)^2$
and a small neighborhood $B(f_n,\eps_{n,\bar \alpha,\bar \beta})$ in $C_{\lambda}(\Ci)$ such that both of the above estimates hold.
The third term is bounded by $\frac{1}{3n}$ by \eqref{est''}, therefore we get
 for any point $(x,y) \in E_{n,\bar \alpha, \bar \beta}$, any $(\alpha,\beta) \in I_{\bar \alpha,\bar \beta}$,   
 and $g \in B(f_{n},\varepsilon_{n,\bar \alpha, \bar\beta})$ that
\begin{equation}\label{here''}\Big |S^{T_{\alpha,\beta}(g)}_{\ell_{n,\bar \alpha,\bar \beta}} h_j(x,y)  -  \int_{\mathbb{S}^1 \times \mathbb{S}^1} h_j(s,t) \,  d(\lambda(s) \times \lambda (t)) \Big |    < \frac1n
\end{equation}
for $1 \le j\le n$.

Since the torus $[0,1)^2$ is compact
we can find a finite cover of it by such sets
$I_{n,\bar\alpha_{n,i},\bar\beta_{n,i}}, 1 \le i \le j_{n}$.
Choose $\eps_n := \min \{\eps_{n,\bar\alpha_{n,i},\bar\beta_{n,i}}: 1 \le i \le j_n\}$.

Consider
 the set
 $$\mathcal G:=\bigcap_{N \ge 1}
  \bigcup_{(\alpha,\beta) \in [0,1)^2 }T_{\alpha,\beta}\Big( \bigcup_{n \ge N}  B(f_n,\varepsilon_n)\Big).$$
  
 The set $\mathcal G$ is a dense $G_\delta$ set since $T_{\alpha,\beta}$ is an isometry. 
  
For each  $g \in \mathcal G$
there are infinite sequences $n_k$ and $(\alpha_k,\beta_k)$ and 
$(\bar \alpha_{n_k,i_k},\bar \beta_{n_k,i_k})$
such that $g \in T_{\alpha_k,\beta_k}(B((f_{n_k}),\varepsilon_{n_k}))$ and
$(\alpha_k,\beta_k) \in I_{n_k,\bar \alpha_{n_k,i_k},\bar \beta_{n_k,i_k}}$.
Consider
$$\mathcal{E}(g)  := \bigcap_{M=1}^\infty \bigcup_{k=M}^\infty  E_{n_k,\bar \alpha_{n_k,i_k},\bar \beta_{n_k,i_k}}.$$
Since $\lambda(E_{n_k,\bar \alpha_{n_k,i_k},\bar \beta_{n_k,i_k}}) >
1 - \frac{1}{n_{k}}$, it follows that  $\lambda(\mathcal{E}(g)) = 1$.

We have shown that 

\begin{equation}\label{13}\Big| S^g_{\ell_{n_k},\bar \alpha_{n_k,i_k},\bar \beta_{n_k,i_k}} h_j(x,y)  -  \int_{\mathbb{S}^1 \times \mathbb{S}^1} h_j(s,t) \,  d(\lambda(s) \times \lambda (t)) \Big |    < \frac{1}{n_k}
\end{equation}
for all $(x,y) \in E_{n_k,\bar \alpha_{n_k,i_k},\bar \beta_{n_k,i_k}}$ and all $1 \le j\le n_k$ for each $k \ge 1$.

The
$
\lim_{\ell \to \infty} S^{g}_{\ell} h_j(x,y)
$
exists for almost every $(x,y)$ by the Birkhoff ergodic theorem. Equation~\eqref{13} holds  infinitely often for every point of $\mathcal{E}(g)$  so we conclude that this limit must equal
 $\int_{\mathbb{S}^1 \times \mathbb{S}^1} h_j(s,t) \,  d(\lambda(s) \times \lambda (t))$ and thus
 $g \times g$ is ergodic, or equivalently $g$ is weakly mixing.

For the proof of (2) is suffices to note that
 $g \in T_{\alpha_k,\beta_k}(B((f_{n_k}),\varepsilon_{n_k}))$ if and only if $T_{\alpha,\beta}(g)\in T_{\alpha+\alpha_k,\beta+\beta_k}(B((f_{n_k}),\varepsilon_{n_k}))$.
\end{proof}

 \section*{Acknowledgements}
We would like to thank CIRM for hospitality during our research in residence program where this paper was finalized.\\
J. Bobok was supported by the European Regional Development Fund, project No.~CZ 02.1.01/0.0/0.0/16\_019/0000778.
J. \v Cin\v c was supported  by the IDUB program no. 1484 ``Excellence initiative – research university'' for the AGH University of Science and Technology. P. Oprocha was supported by National Science Centre, Poland (NCN), grant no. 2019/35/B/ST1/02239.

\nocite{*}

\end{document}